\newtheorem{thm}{Theorem}%[section]
\newtheorem{cnj}[thm]{Conjecture}
\newtheorem{lem}[thm]{Lemma}
\newtheorem{clm}[thm]{Claim}
\newtheorem{prb}[thm]{Problem}
\newtheorem{prp}[thm]{Proposition}
\newtheorem{cor}[thm]{Corollary}
\def\a{{\alpha}}
\def\D{{\Delta}}
\def\m{{\mu}}
\def\cF{{\cal F}}
\def\cH{{\cal H}}
\def\cI{{\cal I}}
\def\cT{{\cal T}}
\def\mt{{\emptyset}}
\def\ocFx{{\overline{\cF_x}}}
\def\dist{{\sf dist}}
\def\ekr{{\sf EKR}}
\def\hk{{\sf HK}}
\definecolor{brwn}{RGB}{140, 70, 20}
\definecolor{gren}{RGB}{  0,140, 10}
\newcommand{\gh}[1]{\textcolor{red}{\sf{#1}}}
\newcommand{\pf}[1]{\textcolor{gren}{\sf{#1}}}
\author{
P\'eter Frankl\thanks{
R\'enyi Institute, Budapest, Hungary,
\texttt{peter.frankl@gmail.com}.
}\\
Glenn Hurlbert\thanks{
Department of Mathematics and Applied Mathematics,
Virginia Commonwealth University, 
Richmond, VA, USA, 
\texttt{ghurlbert@vcu.edu}.}
}
\title{On the Holroyd-Talbot Conjecture\\ for Sparse Graphs}
\begin{document}

\maketitle

\begin{abstract}
Given a graph $G$, let $\mu(G)$ denote the size of the smallest maximal independent set of $G$.
A family of sets is called a {\it star} if some element is in every set of the family.
A {\it split} vertex has degree at least 3.
Holroyd and Talbot conjectured the following Erd\H os-Ko-Rado-type statement about intersecting families of independent sets of graphs: if $1\le r\le \mu(G)/2$ then there is an intersecting family of independent $r$-sets of maximum size that is a star.
In this paper we prove similar statements for sparse graphs on $n$ vertices: roughly, for graphs of bounded average degree with $r\le O(n^{1/3})$, for graphs of bounded degree with $r\le O(n^{1/2})$, and for trees having a bounded number of split vertices with $r\le O(n^{1/2})$.
\end{abstract}

\begin{comment}
\noindent
\gh{Glenn's comments and questions below are in brown.}\\
\pf{Peter's comments and questions below are in green.}\\
\\
\gh{If your name is First Last then you can write your comments by typing}
\begin{verbatim} 
\fl{Blah, blah, blah.}
\end{verbatim} 
\end{comment}

% ================================================

\section{Introduction}
\label{s:Intro}

For $0\le r\le n$, let $\binom{[n]}{r}$ denote the family of $r$-element subsets ({\it $r$-sets}) of $[n]=\{1,2,\ldots,n\}$.
For any family $\cF$ of sets, define the shorthand $\cap\cF=\cap_{S\in\cF}S$.
If $\cap\cF\not=\emptyset$, we say that $\cF$ is a {\it star}; in this case, any $x\in\cap\cF$ is called a ${\it center}$.
The family $\cF_x=\{S\in\cF\ \mid\ x\in S\}$ is called the {\it full star of $\cF$ at $x$}.
Furthermore, we define the notation $\cF^r=\{S\in\cF\ \mid\ |S|=r\}$.
The family $\cF$ is {\it intersecting} if every pair of its members intersects.

Erd\H{o}s, Ko, and Rado \cite{EKR} proved the following classical theorem of central importance in extremal set theory.

\begin{thm}
\label{t:EKR}
{\bf (Erd\H os-Ko-Rado, 1961)}
If $\cF\subseteq \binom{[n]}{r}$ is intersecting for $r\leq n/2$, then $|\cF|\leq \binom{n-1}{r-1}$. 
Moreover, if $r<n/2$, equality holds if and only if $\cF=\binom{[n]}{r}_x$ for some $x\in [n]$.
\end{thm}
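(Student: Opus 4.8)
I would prove Theorem~\ref{t:EKR} by Katona's cyclic permutation method and then read the extremal case off the equality conditions. For the bound, assume $r\ge 1$ (the case $r=0$ being vacuous) and identify each of the $n!$ permutations $\pi$ of $[n]$ with an arrangement of $[n]$ around a circle; call an $r$-set an \emph{arc of $\pi$} if it consists of $r$ cyclically consecutive entries of $\pi$, so each $\pi$ has exactly $n$ arcs. The plan is to double-count the pairs $(\pi,F)$ with $F\in\cF$ an arc of $\pi$. The key input is Katona's lemma: if $r\le n/2$ then for each single $\pi$ at most $r$ of its arcs lie in $\cF$. To see this, fix one arc $A\in\cF$ of $\pi$, say occupying cyclic positions $1,\dots,r$; every other arc meeting $A$ either begins at a position in $\{2,\dots,r\}$ or ends at one in $\{1,\dots,r-1\}$, and pairing ``begins at position $k+1$'' with ``ends at position $k$'' ($k=1,\dots,r-1$) splits these $2r-2$ arcs into $r-1$ pairs whose two members are disjoint --- this is exactly where $r\le n/2$ is used. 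Since $\cF$ is intersecting it contains at most one arc of each pair, hence at most $1+(r-1)=r$ arcs of $\pi$. Summing over $\pi$ bounds the number of pairs by $n!\,r$, while counting by $F$ gives exactly $|\cF|\cdot n\cdot r!\,(n-r)!$ pairs (a fixed $F$ is an arc of $n\cdot r!\,(n-r)!$ permutations); comparing yields $|\cF|\le n!\,r/(n\cdot r!\,(n-r)!)=\binom{n-1}{r-1}$.

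For the characterization, let $r<n/2$ and suppose $|\cF|=\binom{n-1}{r-1}$. Then equality holds in Katona's lemma for \emph{every} $\pi$, so each $\pi$ contributes exactly $r$ arcs of $\cF$; since $r<n/2$ the pairs above are genuine disjoint pairs, and a short lemma shows that $r$ pairwise intersecting arcs of an $n$-cycle with $r<n/2$ must occupy $r$ cyclically consecutive positions. Propagating this over all $\pi$ --- two members of $\cF$ that are adjacent arcs of a common $\pi$ differ in exactly one element, and the ``$r$ consecutive arcs'' condition, imposed for every $\pi$, is rigid enough to force a single common coordinate --- shows that $\cF=\binom{[n]}{r}_x$ for some $x$, which indeed has the extremal size.

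I expect the bound to be routine (the cycle lemma plus a clean double count), and the characterization to be the real work: converting ``every $\pi$ sees exactly $r$ consecutive arcs of $\cF$'' into ``$\cF$ is a star'' requires a careful consistency argument, and one must also note why $r<n/2$ cannot be relaxed (at $r=n/2$ we have $\binom{n-1}{r-1}=\tfrac12\binom{n}{r}$, attained by many non-star families built from complementary pairs). An alternative uniqueness proof --- closer to the shifting machinery used elsewhere in the paper --- first replaces $\cF$ by a shifted intersecting family of the same size and inducts on $n$ via the split ``$n\in F$ or not''; there the delicate point is the boundary instance $n=2r+1$, where the induced subfamily is not forced to be a star.
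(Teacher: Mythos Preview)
The paper does not give its own proof of Theorem~\ref{t:EKR}; it is stated as a classical background result with a citation to \cite{EKR}, so there is nothing in the paper to compare your argument against.

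That said, your plan is correct. Katona's cyclic permutation method is a standard proof of the bound: the key lemma (at most $r$ arcs of any cyclic arrangement can lie in an intersecting family when $r\le n/2$) and the double count are stated accurately, and the arithmetic $|\cF|\cdot n\cdot r!\,(n-r)! \le n!\cdot r$ yields $|\cF|\le\binom{n-1}{r-1}$ as you wrote. Your characterization sketch is also on the right track: when $r<n/2$, equality in the arc lemma forces the $r$ arcs to be consecutive, and the propagation over all cyclic orders is the genuine content --- you are right that this step needs care, but the standard argument (any two sets of $\cF$ appear as adjacent arcs in some $\pi$, so their intersection has size $r-1$, and then a connectivity/rigidity argument pins down a common element) goes through. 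Your remark that $r=n/2$ admits non-star extremal families is also correct and explains why the strict inequality is needed for uniqueness.
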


Hilton and Milner \cite{HilMil} proved the following stronger stability result.

\begin{thm}
\label{t:HM}
{\bf (Hilton-Milner, 1967)}
If $\cF\subseteq \binom{[n]}{r}$ is intersecting for $r\leq n/2$, and $\cF$ is not a star, then $|\cF|\leq \binom{n-1}{r-1}-\binom{n-r-1}{r-1}+1$. 
\end{thm}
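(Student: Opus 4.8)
\medskip
\noindent\textbf{Proof strategy.}
The plan is to split off one element of $[n]$, recast the problem as a cross-intersecting inequality between an $(r-1)$-uniform and an $r$-uniform family, and then prove that inequality by compression together with a Kruskal--Katona shadow estimate, the essential point being that the $r$-uniform part is itself intersecting.

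\smallskip
Since $\cF$ is not a star, fix an element $x\in[n]$ lying in at least one but not every member of $\cF$, and write $\cF=\cF_x\sqcup\mathcal{G}$ where $\mathcal{G}$ consists of the members avoiding $x$; then $\mathcal{G}\ne\mt$, and $x$ may be chosen so that $\cF_x\ne\mt$ as well. Put $\mathcal{A}=\{F\setminus\{x\}\ \mid\ F\in\cF_x\}\sse\binom{[n]\setminus\{x\}}{r-1}$. Since $\cF$ is intersecting and no member of $\mathcal{G}$ contains $x$, every set of $\mathcal{A}$ meets every set of $\mathcal{G}$, so $\mathcal{A}$ and $\mathcal{G}$ are cross-intersecting on the $(n-1)$-set $[n]\setminus\{x\}$; also $\mathcal{G}$ is intersecting and $|\cF|=|\mathcal{A}|+|\mathcal{G}|$, and $n-1\ge 2r-1=(r-1)+r$. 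Finally, the assumption that $\cF$ is not a star says precisely that no element lies in every member of both $\mathcal{A}$ and $\mathcal{G}$, i.e.\ $(\cap\mathcal{A})\cap(\cap\mathcal{G})=\mt$. Thus Theorem~\ref{t:HM} reduces to the following: \emph{if $\mathcal{A}\sse\binom{[m]}{a}$ and $\mathcal{B}\sse\binom{[m]}{b}$ are nonempty and cross-intersecting with $a\le b$ and $m\ge a+b$, if $\mathcal{B}$ is intersecting, and if $(\cap\mathcal{A})\cap(\cap\mathcal{B})=\mt$, then $|\mathcal{A}|+|\mathcal{B}|\le\binom{m}{a}-\binom{m-b}{a}+1$;} for $m=n-1$, $a=r-1$, $b=r$, $\mathcal{B}=\mathcal{G}$ the right-hand side is exactly $\binom{n-1}{r-1}-\binom{n-r-1}{r-1}+1$.

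\smallskip
To prove the displayed inequality I would apply simultaneous $(i,j)$-compressions to the pair $(\mathcal{A},\mathcal{B})$ --- these preserve cardinalities, cross-intersection, the intersecting property of $\mathcal{B}$, and (one checks) the non-star condition --- so that $\mathcal{A}$ and $\mathcal{B}$ may be taken shifted. An $a$-set is excluded from $\mathcal{A}$ exactly when it lies inside some member of the $(m-b)$-uniform complement family $\mathcal{B}^c=\{[m]\setminus B\ \mid\ B\in\mathcal{B}\}$; writing $\partial_a\mathcal{B}^c$ for the family of all $a$-subsets of members of $\mathcal{B}^c$, we get
\[
|\mathcal{A}|+|\mathcal{B}|\ \le\ \binom{m}{a}-|\partial_a\mathcal{B}^c|+|\mathcal{B}^c| ,
\]
and it suffices to show $|\partial_a\mathcal{B}^c|\ge\binom{m-b}{a}+|\mathcal{B}^c|-1$. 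Without further hypotheses this fails (take $\mathcal{A},\mathcal{B}$ stars at a common point), but here $\mathcal{B}$ is intersecting and $(\cap\mathcal{A})\cap(\cap\mathcal{B})=\mt$, which forces $\mathcal{B}$ --- hence $\mathcal{B}^c$ --- not to be too concentrated; combined with $m\ge a+b$ and the Kruskal--Katona theorem (reducing to the case that $\mathcal{B}^c$ is a colex-initial segment), one obtains the bound, the ``$-1$'' accounting for a single extremal set of $\mathcal{B}$.

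\smallskip
The hard part is making this last step sharp: extracting the exact additive constant $1$ uniformly in $m,a,b$, and in particular at the boundary $m=a+b$, i.e.\ $r=n/2$, where the colex-initial segment is tight and where both extremal families live --- the classical Hilton--Milner configuration (a full star at $x$ trimmed to meet a fixed $r$-set $B$, together with $B$; here $|\mathcal{B}|=1$) and the family $\binom{[2r-1]}{r}$ viewed inside $[n]$ with $n=2r$ (here $|\mathcal{B}|$ is as large as possible). The role of $\mathcal{B}$ being intersecting is crucial throughout: the raw cross-intersecting bound on $|\mathcal{A}|+|\mathcal{B}|$ is strictly weaker than the Hilton--Milner bound. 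Everything else --- the reduction above, the stability of compression under the listed properties, and the binomial identities --- is routine, and the hypothesis $r\le n/2$ enters precisely as $m\ge a+b$. An alternative route is to compress $\cF$ itself, where the corresponding obstacle is that compression can turn a non-star into a star and this must be tracked.
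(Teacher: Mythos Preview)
The paper does not prove Theorem~\ref{t:HM}; it is quoted as a classical result of Hilton and Milner and used as a black box in the proofs of Theorems~\ref{t:HolTalCnj2}, \ref{t:SpiderEKR}, and \ref{t:SplitEKR}. So there is no ``paper's own proof'' to compare against --- your proposal is being measured only against itself.

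As an outline, your reduction is the standard one: split off an element $x$, obtain cross-intersecting families $\mathcal{A}\sse\binom{[n-1]}{r-1}$ and $\mathcal{G}\sse\binom{[n-1]}{r}$, and bound $|\mathcal{A}|+|\mathcal{G}|$. This is indeed how modern proofs of Hilton--Milner proceed. But two of your steps are not proofs, and one of them is actually false as stated.

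First, the parenthetical ``(one checks)'' that simultaneous $(i,j)$-compressions preserve the condition $(\cap\mathcal{A})\cap(\cap\mathcal{B})=\mt$ is not true in general: a single compression can push every set of $\mathcal{B}$ through a common element and thereby create a shared center. The standard remedy is not to assert preservation but to \emph{track} it --- compress until the next shift would create a common star, stop, and argue at that boundary configuration. You acknowledge this obstacle only for the alternative route of compressing $\cF$ directly; it is equally present here.

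Second, the inequality $|\partial_a\mathcal{B}^c|\ge\binom{m-b}{a}+|\mathcal{B}^c|-1$ is exactly the content of the theorem in this language, and you do not prove it. Saying ``combined with $m\ge a+b$ and the Kruskal--Katona theorem \ldots\ one obtains the bound'' is not an argument: Kruskal--Katona applied to a colex-initial $\mathcal{B}^c$ does not by itself see that $\mathcal{B}$ is intersecting or that the pair is non-star, and those hypotheses are precisely what distinguish the Hilton--Milner bound from the weaker cross-intersecting bound. You then concede that ``the hard part is making this last step sharp,'' which is to say the proof has not been given. What is written is a correct description of where the difficulty lies, but not a resolution of it.
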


For a graph $G$, let $\cI(G)$ denote the family of independent sets of $G$.
We write $s_r(v)=|\cI^r_v(G)|$ when $G$ is understood.
Let $\cF\subseteq\cI^r(G)$ be an intersecting subfamily of maximum size.
We say that $G$ is $r$-\ekr\ if some $v$ satisfies $s_r(v)=|\cF|$, and {\it strictly} $r$-\ekr\ if every such $\cF$ equals $\cI_v^r(G)$ for some $v$.

Write $\a(G)$ for the independence number of $G$.
Let $\mu(G)$ denote the size of a smallest maximal independent set of $G$. 
Equivalently, $\mu(G)$ is the size of the smallest independent dominating set of $G$.
Holroyd and Talbot \cite{HolTal} made the following conjecture.

\begin{cnj}
\label{j:HolTal}
{\bf (Holroyd-Talbot, 2005)}
For any graph $G$, if $1\leq r\leq \mu(G)/2$ then $G$ is $r$-\ekr.
\end{cnj}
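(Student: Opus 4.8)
\medskip
\noindent\textbf{Proof proposal.}
The plan is to transport the kernel-and-shifting machinery behind Theorem~\ref{t:EKR} to the independence complex $\cI(G)$, with the hypothesis $r\le\m(G)/2$ playing the role that $2r\le n$ plays classically. The first observation is that this hypothesis is precisely the structural analogue of $2r\le n$: any $S\in\cI^r(G)$ extends to a maximal independent set $M$ with $|M|\ge\m(G)\ge 2r$, so $M\setminus S$ contains an independent $r$-set disjoint from $S$. Hence $\cI^r(G)$ is itself very far from intersecting, and a maximum intersecting $\cF\sse\cI^r(G)$ is a proper subfamily; fix such a maximum $\cF$, so that the goal becomes to exhibit $v$ with $s_r(v)\ge|\cF|$.

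\medskip
\noindent\textbf{Step 1: compression.}
Call $w$ a \emph{dominator} of $u$ if $N(w)\sse N(u)\cup\{u\}$; then whenever $S\in\cI(G)$, $u\in S$ and $w\notin S$, the set $(S\setminus\{u\})\cup\{w\}$ is again independent. For such a pair, define $\shift_{u\to w}$ on families exactly as the classical $(i,j)$-shift used to prove Theorem~\ref{t:EKR}. Verbatim from the classical set-theoretic argument, $\shift_{u\to w}$ is a bijection on families that preserves the intersecting property and weakly increases $|\cF\cap\cI^r_w(G)|$; moreover it keeps every set independent, by the dominator condition. Iterating over all dominator pairs terminates at a family stable under all such shifts, and I would then argue that on the resulting ``domination-compressed'' configuration a maximum intersecting family must be the full star $\cI^r_v(G)$ at a vertex $v$ of smallest degree, transferring the bound back to the original $\cF$.

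\medskip
\noindent\textbf{Step 2: a Hilton--Milner-type count for the stuck cases.}
When Step~1 yields no useful dominator pairs -- e.g.\ on vertex-transitive graphs -- I would argue directly, in the spirit of Theorem~\ref{t:HM}. Choose $v$ maximising $s_r(v)$ and split $\cF=(\cF\cap\cI^r_v(G))\cup\cF'$, where no member of $\cF'$ contains $v$. Each $S\in\cF'$ is forced to meet a kernel determined by the full star $\cF_v$; using the opening observation, extend $S$ to a maximal independent $M\supseteq S$ with $|M|\ge 2r$, and use the $\ge r$ ``spare'' vertices of $M\setminus S$ to swap a carefully chosen element of $S$ for $v$, keeping the result an independent $r$-set that still meets every member of $\cF$. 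Arranging the choice lexicographically should yield an injection $\cF'\to\cI^r_v(G)$ landing outside $\cF_v$, whence $|\cF|=|\cF\cap\cI^r_v(G)|+|\cF'|\le s_r(v)$.

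\medskip
\noindent\textbf{Main obstacle.}
The crux is Step~1: no operation on $\cI(G)$ is known that both preserves the intersecting property and monotonically simplifies an \emph{arbitrary} $G$, and one cannot in general identify the eventual star centre in advance -- which is exactly why Conjecture~\ref{j:HolTal} remains open. A secondary difficulty lies in Step~2: the swap must simultaneously respect independence (controlled by $\m(G)$) and the intersecting property (the kernel forced by $\cF_v$ may be spread over many vertices), so pinning down which element of $S$ may be exchanged for $v$ needs a delicate, likely weighted, argument. It is precisely this tension between the independence and intersection constraints that makes the bounded-average-degree, bounded-degree, and few-split-vertex regimes treated in this paper the natural first cases in which a clean argument is available.
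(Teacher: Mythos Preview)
The statement you are attempting to prove is Conjecture~\ref{j:HolTal}, which the paper records as an \emph{open} conjecture; there is no proof in the paper to compare against. The paper's actual contribution is the collection of partial results Theorems~\ref{t:HolTalCnj1}, \ref{t:HolTalCnj2}, \ref{t:SpiderEKR}, and \ref{t:SplitEKR}, and none of these is obtained by anything resembling your Steps~1--2: each one bounds $|\cF|$ from above via Theorem~\ref{t:HM} or Theorem~\ref{t:Frankl} and bounds $s_r(v)$ from below by explicit counting (Claim~\ref{cm:star}, Lemmas~\ref{l:BigStar}, \ref{l:SpiderStar}, \ref{l:SplitStar}), then compares the two numbers.

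Your outline has genuine gaps beyond those you already flag. In Step~1, iterating $\shift_{u\to w}$ over dominator pairs only produces a shifted \emph{family}; the graph $G$ is unchanged, and you give no argument whatsoever for the assertion that a shift-stable intersecting subfamily of $\cI^r(G)$ must be a full star at a minimum-degree vertex. In the classical proof of Theorem~\ref{t:EKR} the corresponding step is itself non-trivial and relies on the total order on $[n]$; no analogue is known for a general independence complex, and graphs with no non-trivial dominator pairs (your ``stuck cases'') are the rule rather than the exception. In Step~2, the proposed swap is not well-defined: to replace some $u\in S$ by $v$ and remain in $\cI^r(G)$ you need $v$ non-adjacent to every vertex of $S\setminus\{u\}$, and nothing in the hypothesis $r\le\m(G)/2$ or in the existence of the ``spare'' set $M\setminus S$ supplies this --- the spare vertices are compatible with $S$, but $v$ need not be. Even if independence could be arranged, you have argued neither injectivity of the map $\cF'\to\cI^r_v(G)$ nor that its image avoids $\cF\cap\cI^r_v(G)$. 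These are not technicalities to be filled in later; they are precisely the obstructions that keep Conjecture~\ref{j:HolTal} open, as your own final paragraph concedes.
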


Of course, this conjecture is true for the empty graph by Theorem \ref{t:EKR}.
While not explicitly stated in graph-theoretic terms, earlier results by Berge \cite{Berge}, Deza and Frankl \cite{DezFra}, and Bollob\'as and Leader \cite{BolLead} support the conjecture.
For example, the case of $G$ equal to a disjoint union of $k$ complete graphs of sizes $n_1\le\cdots\le n_r$ was verified (in fact for all $r\le\a(G)$) in \cite{BolLead,DezFra} for the uniform case $2\le n_1=\cdots=n_k$, in \cite{HolSpeTal} for the non-uniform case $2\le n_1\le\cdots\le n_k$, and in \cite{Bey} for the general case.
The cases of $G$ being a power of either a path \cite{HolSpeTal} or a cycle \cite{Talbot}, or a {\it special chain} (essentially, a path of complete graphs of increasing size) or the disjoint union of two special chains \cite{HurKamChord}, were both verified for all $r\le\a(G)$ as well.
The conjecture has been proven for $\mu(G)$ sufficiently large in terms of $r$ \cite{Borg}, and also for various graph classes, for example, disjoint unions of complete graphs, paths, and cycles containing at least one isolated vertex \cite{BorHol,HolSpeTal}, disjoint unions of complete multipartite graphs containing at least one isolated vertex \cite{BorHolDouble}, disjoint unions of length-2 paths \cite{FegHurKam}, chordal graphs containing an isolated vertex \cite{HurKamChord}, and others.
In fact, for the cases of complete graphs and cycles just mentioned, \cite{BorHol} extends the range of $r$ beyond $\m(G)/2$ to $\a(G)/2$.
One can observe, for example, that the complete $k$-partite graph $G=K_{n_1,\ldots,n_k}$ is $r$-\ekr\ for all $r\le\a(G)/2$, because every independent set is contained in some part.
However, $G$ is not $r$-\ekr\ for $\a(G)/2 < r \le \a(G)$.

For vertices $u$ and $v$ in a graph $G$, we use the notations $\deg_G(u)$ and $\dist_G(u,v)$ for the degree of vertex $u$ and the distance between $u$ and $v$ in $G$, respectively; we may omit the subscript if the context is clear.

% ================================================

\section{Results}

Here we prove the following theorem.

\begin{thm}
\label{t:HolTalCnj1}
Let $r$ and $d$ be positive integers.
Suppose that $G$ is a graph on $n>\frac{27}{8}dr^2$ vertices, having maximum degree less than $d$.
Then $G$ is $r$-\ekr.
\end{thm}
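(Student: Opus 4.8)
The plan is to deduce Theorem~\ref{t:HolTalCnj1} from the Hilton--Milner inequality (Theorem~\ref{t:HM}). Let $\cF\sse\cI^r(G)$ be an intersecting family of maximum size. For every vertex $v$ the full star $\cI^r_v(G)$ is intersecting, so $|\cF|\ge\max_v s_r(v)$; and if $\cF$ happens to be a star with center $x$ then $\cF\sse\cI^r_x(G)$ gives $|\cF|\le s_r(x)\le\max_v s_r(v)$, whence $|\cF|=\max_v s_r(v)$ and $G$ is $r$-\ekr. So it suffices to rule out the possibility that a maximum intersecting $\cF$ is \emph{not} a star. The strategy is to show that in a graph this sparse a single full star already outnumbers the Hilton--Milner bound for non-star intersecting families of $r$-sets in $[n]$, which is a contradiction.

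First I would dispose of the trivial case $r=1$ (an intersecting family of $1$-sets has at most one member, hence is a star), so assume $r\ge 2$. Since $\D(G)\le d-1$, every vertex $v$ satisfies $|N[v]|\le d$; moreover $n>\tfrac{27}{8}dr^2$ forces $n>2r$, so Theorem~\ref{t:HM} is applicable, and $\a(G)\ge n/d>r$ so that $\cI^r(G)\ne\mt$. Now suppose, for contradiction, that a maximum intersecting $\cF\sse\cI^r(G)$ is not a star. Viewing $\cF$ as an intersecting, non-star subfamily of $\binom{[n]}{r}$ with $r<n/2$, Theorem~\ref{t:HM} yields
$$|\cF|\ \le\ \binom{n-1}{r-1}-\binom{n-r-1}{r-1}+1.$$
On the other hand, for any fixed vertex $v$ put $H=G-N[v]$, so that $s_r(v)=|\cI^{r-1}(H)|$, $|V(H)|\ge n-d$, and $\D(H)\le d-1$. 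Constructing an independent $(r-1)$-subset of $H$ one vertex at a time: after $i$ vertices have been chosen, at most $id$ vertices of $H$ are forbidden (the chosen ones and their neighbours), leaving at least $(n-d)-id$ admissible choices; dividing by $(r-1)!$ for the ordering gives
$$s_r(v)\ \ge\ \frac{1}{(r-1)!}\prod_{i=0}^{r-2}\big((n-d)-id\big)\ =\ \frac{1}{(r-1)!}\prod_{i=1}^{r-1}(n-id),$$
with every factor positive because $n>(r-1)d$.

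What remains is purely numerical: to show that $n>\tfrac{27}{8}dr^2$ forces
$$\binom{n-1}{r-1}-\binom{n-r-1}{r-1}+1\ <\ \frac{1}{(r-1)!}\prod_{i=1}^{r-1}(n-id),$$
since this contradicts $|\cF|\ge\max_v s_r(v)$ and completes the proof. For the left-hand side I would use the telescoping identity $\binom{n-1}{r-1}-\binom{n-r-1}{r-1}=\sum_{m=n-r-1}^{n-2}\binom{m}{r-2}\le r\binom{n-2}{r-2}=\tfrac{r(r-1)}{(r-1)!}\prod_{i=2}^{r-1}(n-i)$; then, multiplying through by $(r-1)!$ and cancelling $\prod_{i=2}^{r-1}(n-i)$ against $\prod_{i=1}^{r-1}(n-id)=(n-d)\prod_{i=2}^{r-1}(n-id)$, the inequality reduces to one of the form $(n-d)\prod_{i=2}^{r-1}\big(1-\tfrac{i(d-1)}{n-i}\big)>r(r-1)+\text{(lower-order term)}$, and bounding the product below by $1-\tfrac{d-1}{n-r}\binom r2$ turns it into an elementary one-variable inequality in $n$ that holds once $n$ exceeds a fixed constant times $dr^2$. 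The only genuine idea here is the Hilton--Milner dichotomy together with the observation that in a sparse graph a full star beats every non-star intersecting family; everything else is bookkeeping. I expect the real work to be in that last step --- extracting the clean threshold $\tfrac{27}{8}dr^2$ rather than some larger constant, since the crude estimates above carry noticeable slack, so a more economical comparison of $\prod_{i=1}^{r-1}(n-id)$ with $\binom{n-1}{r-1}-\binom{n-r-1}{r-1}+1$, and some care with small $r$ where the binomials are not well approximated by their leading terms, will be needed.
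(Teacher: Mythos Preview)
Your approach is correct but genuinely different from the paper's. You compare the Hilton--Milner bound for non-star families directly against a lower bound for the full star $s_r(v)$; the paper instead invokes Frankl's 2020 diversity theorem (Theorem~\ref{t:Frankl}), which for $r<n/72$ produces a vertex $x$ with $|\cF\setminus\cF_x|\le\binom{n-3}{r-2}$, and then runs an exchange argument: picking any $E\in\cF\setminus\cF_x$, every set in $\cI^r_x(G)$ disjoint from $E$ is absent from $\cF_x$, so $|\cF|\le |\cI^r_x|-\frac{1}{(r-1)!}\prod_{i=1}^{r-1}(n-r-id)+\binom{n-3}{r-2}$, and one only has to check that the subtracted term dominates $\binom{n-3}{r-2}$ (this is Lemma~\ref{l:estim}).

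Your route is more elementary---it needs only the 1967 Hilton--Milner theorem rather than the recent diversity result---and it is essentially the method the paper uses for Theorem~\ref{t:HolTalCnj2}. It also clears the stated threshold $n>\tfrac{27}{8}dr^2$ with room to spare: bounding the Hilton--Milner quantity by $1+r\binom{n-2}{r-2}\le \frac{r^2}{(r-1)!}n^{r-2}$ (for $r\ge 3$) and the star by $\frac{n^{r-1}}{(r-1)!}\bigl(1-\tfrac{dr^2}{2n}\bigr)>\frac{23}{27}\cdot\frac{n^{r-1}}{(r-1)!}$, one only needs $r^2/n<\tfrac{23}{27}$, which is immediate; the case $r=2$ is a one-line check. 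So your worry about ``extracting the clean threshold'' is unfounded here---the constant $\tfrac{27}{8}$ is not tight for either argument. What the paper's approach buys is a structurally sharper comparison (a single $\binom{n-3}{r-2}$ rather than the full Hilton--Milner expression), at the cost of importing a deeper ingredient; for the present theorem your direct Hilton--Milner comparison is arguably cleaner. You should also dispose of $d=1$ (empty graph, hence Theorem~\ref{t:EKR}) alongside $r=1$.
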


We can expand the class of graphs beyond bounded degree to bounded average degree at the cost of reducing the range of $r$ from $O(n^{1/2})$ to $O(n^{1/3})$, as follows.

\begin{thm}
\label{t:HolTalCnj2}
Given a positive integer $r$, let $c\ge e/36$ be a constant.
Suppose that $G$ is a graph on $n>18cr^3$ vertices, having at most $cn$ edges.
Then $G$ is $r$-\ekr.
\end{thm}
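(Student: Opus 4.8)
The plan is to take a maximum intersecting subfamily $\cF\sse\cI^r(G)$ and prove the single inequality $|\cF|\le\max_v s_r(v)$: since every full star $\cI^r_v(G)$ is itself intersecting and has size $s_r(v)$, the reverse inequality is automatic, so we get equality and the vertex attaining $\max_v s_r(v)$ witnesses that $G$ is $r$-\ekr. (If $\cI^r(G)=\mt$ then $\cF=\mt$ and all $s_r(v)=0$, so there is nothing to do; likewise $G$ is trivially $1$-\ekr, so assume $r\ge 2$.) If $\cap\cF\ne\mt$, pick a center $u\in\cap\cF$; then $\cF\sse\cI^r_u(G)$, so $|\cF|\le s_r(u)\le\max_v s_r(v)$ and we are done. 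Everything therefore comes down to the case $\cap\cF=\mt$, where we must pit an upper bound for $|\cF|$ against a lower bound for $\max_v s_r(v)$.

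For the lower bound I would use a one-line double count: every independent $r$-set has exactly $r$ vertices, so $\sum_v s_r(v)=r\,|\cI^r(G)|$, whence $\max_v s_r(v)\ge\frac rn|\cI^r(G)|$. An $r$-subset of $[n]$ is non-independent only if it contains one of the $\le cn$ edges of $G$, and each edge lies in $\binom{n-2}{r-2}$ of the $r$-sets, so $|\cI^r(G)|\ge\binom nr-cn\binom{n-2}{r-2}$. Using $\frac rn\binom nr=\binom{n-1}{r-1}$, this gives
\[
\max_v s_r(v)\ \ge\ \binom{n-1}{r-1}-cr\binom{n-2}{r-2}.
\]

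For the upper bound on a non-star intersecting $\cF$, I would not use compression (it need not preserve independence) but rather the Hilton--Milner covering argument directly. Fix $A\in\cF$; as $\cF$ is not a star, for each $a\in A$ there is $B_a\in\cF$ with $a\notin B_a$. Since $\cF$ is intersecting, $\cF=\bigcup_{a\in A}\cF_a$ with $\cF_a=\{F\in\cF:a\in F\}$, and every $F\in\cF_a$ also meets $B_a$, hence contains $a$ together with one of the $r$ vertices of $B_a$; so $|\cF_a|\le r\binom{n-2}{r-2}$ and
\[
|\cF|\ \le\ r^2\binom{n-2}{r-2}.
\]
(Alternatively one could invoke Theorem~\ref{t:HM}, valid because $\cF\sse\binom{[n]}{r}$ is intersecting, non-star and $r\le n/2$, to get the sharper $|\cF|\le\binom{n-1}{r-1}-\binom{n-r-1}{r-1}+1$; the cruder bound already suffices for the stated range of $r$, and using it rather than Hilton--Milner is what gives the exponent $3$ here in place of the exponent $2$ that Hilton--Milner would yield.)

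Combining the two displays, the theorem reduces to checking $r^2\binom{n-2}{r-2}\le\binom{n-1}{r-1}-cr\binom{n-2}{r-2}$; since $\binom{n-1}{r-1}=\frac{n-1}{r-1}\binom{n-2}{r-2}$, this is just $(r^2+cr)(r-1)\le n-1$, which follows from $n>18cr^3$ together with $c\ge e/36$ by an elementary estimate (with the few small values of $r$ handled directly). The main difficulty is not conceptual but this final bookkeeping: one must verify that the hypotheses on $n$ and $c$ really absorb the error term $cr\binom{n-2}{r-2}$ and the slack lost in the covering bound, and confirm en route that $r\le n/2$ and, where needed, $\cI^r(G)\ne\mt$, so that all the tools apply.
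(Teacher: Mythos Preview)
Your argument is correct, and it is genuinely different from the paper's proof. The paper obtains its lower bound on a star by first deleting the $t\le n/3r$ vertices of degree $\ge 3cr$ to produce an induced subgraph $G_t$ of bounded maximum degree, and then applying a greedy count (Claim~\ref{cm:star}) together with the analytic estimates of Proposition~\ref{p:Estim}, Corollary~\ref{c:Estim}, and Lemma~\ref{l:BigStar} to get $s_r(v)\ge \frac{n^{r-1}}{(r-1)!}e^{-(r-1)2k/(k+1)^2}$ for every vertex of $G_t$. You bypass all of that machinery with the double count $\sum_v s_r(v)=r\,|\cI^r(G)|$ and the edge union bound $|\cI^r(G)|\ge\binom{n}{r}-cn\binom{n-2}{r-2}$, which directly yields $\max_v s_r(v)\ge\binom{n-1}{r-1}-cr\binom{n-2}{r-2}$. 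For the upper bound on a non-star $\cF$, the paper invokes Hilton--Milner (Theorem~\ref{t:HM}) to get $|\cF|<r\binom{n-2}{r-2}$, whereas your direct covering argument gives $r^2\binom{n-2}{r-2}$. The final inequality $(r^2+cr)(r-1)\le n-1$ does follow from $n>18cr^3$ and $c\ge e/36$ for all $r\ge 2$ (indeed one checks $(r^2+cr)(r-1)<\tfrac{1}{18}(18r^3-r^2+r)\cdot 18c<18cr^3$), so the bookkeeping closes.

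What each approach buys: the paper's method gives a pointwise lower bound on $s_r(v)$ for \emph{every} vertex of a large induced subgraph, which is more than is needed here; your averaging only controls the maximum, but that is exactly what $r$-\ekr\ requires, and the proof is far shorter. More interestingly, your diagnosis of where the cubic exponent comes from is sharper than the paper's: in the paper the $r^3$ is baked into the degree-reduction step (it is needed to make Lemma~\ref{l:BigStar} applicable), whereas in your argument it arises solely from the crude $r^2\binom{n-2}{r-2}$ covering bound. Swapping in the full Hilton--Milner bound on your side would reduce the comparison to roughly $cr\binom{n-2}{r-2}\lesssim\binom{n-r-1}{r-1}$, which holds already for $n$ of order $cr^2$ --- so your approach in fact points to the quadratic improvement the paper conjectures just after the statement of Theorem~\ref{t:HolTalCnj2}.
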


It is likely that a quadratic bound on $n$ is possible for Theorem \ref{t:HolTalCnj2} as well.
Note that the case $c=1$ in Theorem \ref{t:HolTalCnj2} is especially relevant for trees.
In this case, we can retrieve a quadratic lower bound for $n$ for one special class of trees.

A {\it split vertex} in a graph is a vertex of degree at least three.
A {\it spider} is a tree with exactly one split vertex.
For a spider $S$ with split vertex $w$ and leaves $v_1,\ldots,v_k$, we write $S=S(\ell_1,\ldots,\ell_k)$, where $\ell_i=\dist(w,v_i)$.
The notation is written in {\it spider order} when the following conditions hold:
\begin{itemize}
    \item 
    if $\ell_i$ and $\ell_j$ are both odd and $\ell_i<\ell_j$ then $i<j$;
    \item
    if $\ell_i$ and $\ell_j$ are both even and $\ell_i<\ell_j$ then $i>j$; and
    \item
    if $\ell_i$ is odd and $\ell_j$ is even then $i<j$.
\end{itemize}
Notice that, since every independent set of $S(1,1,\ldots,1)$ is a subset of its leaves, Conjecture \ref{j:HolTal} is true for $S(1,1,\ldots,1)$.
In an attempt to prove the Holroyd-Talbot conjecture for spiders by induction, the authors of \cite{HurKamTrees} proved the following result.

\begin{thm}
\label{t:SpiderOrder}
{\bf (Hurlbert-Kamat, 2022)}
Suppose that $S=S(\ell_1,\ldots,\ell_k)$ is a spider written in spider order.
Let $w$ be the split vertex of $S$, for each $i$ let $u_i$ be any vertex on the $wv_i$-path, and suppose that $r\le\a(S)$.
Then
\begin{enumerate}
    \item 
    \label{p:root}
    $s_r(w)\le s_r(v_i)$ for all $i$,
    \item 
    \label{p:legs}
    $s_r(u_i)\le s_r(v_i)$ for all $i$, and
    \item 
    \label{p:leaves}
    $s_r(v_j)\le s_r(v_i)$ for all $i<j$.
\end{enumerate}
\end{thm}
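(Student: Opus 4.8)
The plan is to count independent $r$-sets by their "local structure" near the relevant vertex and to set up injections between the families $\cI_r(w)$, $\cI_r(u_i)$, and $\cI_r(v_j)$. The cleanest tool for all three parts is a \emph{compression} (or \emph{slide}) operation along a single leg of the spider: if $x$ is an internal vertex of the $wv_i$-path and $x'$ is its neighbor one step closer to $v_i$, then for any independent set $A$ containing $x$ but not $x'$, the set $A' = (A\setminus\{x\})\cup\{x'\}$ is also independent, since the only neighbors of $x'$ along the path are $x$ and the next vertex toward $v_i$, and along a single leg these slides do not interfere with the rest of $A$ sitting on the other legs. The subtlety is that $x$ may have a neighbor that is \emph{not} on the leg only when $x=w$; for $x\neq w$ on a leg, $x$ has exactly two neighbors, both on the leg, so the slide is clean. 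This asymmetry is exactly why $w$ is special in part~\ref{p:root}.

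For part~\ref{p:legs}, fix the leg $i$ and consider the sequence of vertices from $u_i$ out to $v_i$. I would show $s_r(x)\le s_r(x')$ whenever $x'$ is the out-neighbor of $x$ along the leg (for $x\ne w$), by constructing an injection $\cI^r_x \to \cI^r_{x'}$: given $A\ni x$, if $x'\notin A$ slide as above to get $A'\ni x'$; if $x'\in A$ already, then $A$ is not independent (since $xx'$ is an edge), so this case is vacuous, and the map $A\mapsto A'$ is injective because we can invert it (the preimage of $B\ni x'$ is $(B\setminus\{x'\})\cup\{x\}$, which is independent unless $B$ also contains the out-neighbor of $x'$ — and those $B$ are simply not in the image). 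Iterating from $u_i$ to $v_i$ gives $s_r(u_i)\le s_r(v_i)$. Part~\ref{p:root} is the $u_i=w$ case, but now the slide from $w$ to its neighbor $x_1$ on leg $i$ can fail: some $A\ni w$ may become non-independent because $x_1$ is adjacent to the depth-2 vertex on leg $i$, which might lie in $A$. The fix is the standard one: partition $\cI^r(w)$ according to whether $A$, after deleting $w$, extends to leg $i$ or not, and handle the "blocked" sets by sliding along a \emph{different} leg, or by a direct bijection using the pendant leaf $v_i$ of a length-1 leg if one exists; since $S$ is a spider with $k\ge 3$ legs, there is room to reroute. This rerouting is the step I expect to be the main obstacle, as it requires care about which legs are "available" and uses the spider-order hypothesis (in particular the placement of odd versus even legs) to guarantee the target leg's leaf has at least as many extensions.

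For part~\ref{p:leaves}, I want $s_r(v_j)\le s_r(v_i)$ for $i<j$, i.e. for legs ordered by the spider order. Here the right move is a bijection \emph{between leaves}: a "leg-swap" map that, given $A\ni v_j$, reflects the portion of $A$ on legs $i$ and $j$ (together with $w$'s status) to produce an independent set containing $v_i$. When $\ell_i$ and $\ell_j$ have the same parity this is essentially an isometry of the relevant subtree after truncating the longer leg, so the shorter-leg leaf receives at least as many sets; when the parities differ, the spider-order rules force $\ell_i$ odd and $\ell_j$ even (third bullet), and one compares $v_i$ (at odd distance from $w$) against $v_j$ (at even distance) by first sliding $A$ inward along leg $j$ as in part~\ref{p:legs} — which is loss-free — until it can be matched to a set on leg $i$; the parity bullets are precisely what make $\dist(w,v_i)\le\dist(w,v_j)$ in the sense needed for the truncation to go the right way. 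I would assemble these three injections, invoke Theorem~\ref{t:HolTalCnj1}-style counting only insofar as needed, and conclude. The technical heart, and the place where the hypothesis $r\le\a(S)$ is used, is ensuring that after truncating a leg the count does not drop: one must check that every independent $r$-set on the smaller configuration actually lifts back, which holds because $r$ never exceeds the independence number of the truncated spider.
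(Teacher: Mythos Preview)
This theorem is not proved in the present paper at all: it is quoted from \cite{HurKamTrees} (Hurlbert--Kamat, 2022) and used as a black box in the proofs of Lemma~\ref{l:SpiderStar} and Theorem~\ref{t:SpiderEKR}. There is therefore no ``paper's own proof'' to compare your attempt against.

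That said, your sketch has a concrete gap already in part~\ref{p:legs}. You propose the one-step slide $A\mapsto A'=(A\setminus\{x\})\cup\{x'\}$ and argue about injectivity, but you never verify that $A'$ is independent. It need not be: if $x''$ denotes the neighbour of $x'$ one step further toward $v_i$, nothing prevents $x''\in A$ (the edge $xx''$ does not exist), and then $A'$ contains the adjacent pair $x',x''$. So the map does not even land in $\cI^r_{x'}(S)$ in general, and iterating it does not give $s_r(u_i)\le s_r(v_i)$. A correct argument along a single leg requires a more global operation (a reflection of the whole leg, or a compression that simultaneously shifts a maximal block of occupied vertices), not a one-vertex slide. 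Your treatments of parts~\ref{p:root} and~\ref{p:leaves} inherit this problem and are in any case only outlines: the ``rerouting'' in part~\ref{p:root} and the ``leg-swap/truncation'' in part~\ref{p:leaves} are described at a level where the spider-order hypothesis is invoked by assertion rather than used in a verifiable inequality. Finally, the reference to ``Theorem~\ref{t:HolTalCnj1}-style counting'' is misplaced; that theorem concerns bounded-degree graphs and Hilton--Milner-type estimates, which play no role in comparing the stars $s_r(\cdot)$ at different vertices of a fixed spider.
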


Estrugo and Pastine \cite{EstrPast} call a tree $T$ $r$-\hk\ if $s_r(v)$ is maximized at a leaf of $T$ (and \hk\ if $r$-\hk\ for all $r\le\a(T)$).
It is proved in \cite{HurKamChord} that every tree is $r$-\hk\ for $r\leq 4$, but Baber \cite{Babe}, Borg \cite{BorCounter}, and Feghali, Johnson, and Thomas \cite{FegJohnTho} each found counterexamples when $r\ge 5$.
However, parts \ref{p:root} and \ref{p:legs} of Theorem \ref{t:SpiderOrder} together imply that every spider $S$ is \hk.
Theorem \ref{t:HolTalCnj2} shows that spiders are $r$-\ekr\ for $r<(n/18)^{1/3}$.
Unfortunately, $\mu/2$ for spiders is roughly $n/6$, so there remains a big gap.
Our next theorem shrinks that gap somewhat.

\begin{thm}
\label{t:SpiderEKR}
Let $S=S(\ell_1,\ldots,\ell_k)$ be a spider on $n$ vertices, with split vertex $w$ and leaves $v_1,\ldots,v_k$.
Suppose that $r\le\sqrt{n\ln 2}-(\ln 2)/2$.
Then $S$ is $r$-\ekr.
\end{thm}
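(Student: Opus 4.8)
Relabel so that the legs of $S$ are listed in spider order. By parts~\ref{p:root} and~\ref{p:legs} of Theorem~\ref{t:SpiderOrder}, $s_r(u)\le s_r(v_i)$ whenever $u$ lies on the $wv_i$-path, and by part~\ref{p:leaves}, $s_r(v_i)\le s_r(v_1)$ for all $i$; hence $s_r$ attains its maximum over $V(S)$ at the leaf $v_1$. Since $\cI^r_{v_1}(S)$ is an intersecting subfamily of $\cI^r(S)$ of size $s_r(v_1)$, to prove that $S$ is $r$-\ekr\ it suffices to show that every intersecting $\cF\sse\cI^r(S)$ has $|\cF|\le s_r(v_1)$; we may assume $\cF$ is a maximum such family, so in particular $\cF$ is maximal. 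If $\cF$ is a star, with centre $c$, then $\cF\sse\cI^r_c(S)$ gives $|\cF|\le s_r(c)\le s_r(v_1)$, so we may assume $\cF$ is not a star. Then $v_1$ is not a centre of $\cF$, so $\cF^-=\{F\in\cF: v_1\notin F\}\ne\emptyset$; set $\cF_{v_1}=\cF\setminus\cF^-$. As $|\cF_{v_1}|\le s_r(v_1)$, it is enough to prove $|\cF^-|\le s_r(v_1)-|\cF_{v_1}|=|\cI^r_{v_1}(S)\setminus\cF|$.

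The plan is to establish this by constructing an injection $\phi\colon\cF^-\hookrightarrow\cI^r_{v_1}(S)\setminus\cF$. For $F\in\cF^-$, I would set $\phi(F)=\{v_1\}\cup g(F)$, where $g(F)$ is a canonically chosen independent $(r-1)$-set of the subgraph $S-N[v_1]-F$ of $S$ (delete $v_1$, its unique neighbour, and the vertices of $F$); then $\phi(F)$ is an independent $r$-set through $v_1$ disjoint from $F$, hence $\phi(F)\notin\cF$ because $\cF$ is intersecting, so $\phi(F)$ genuinely lies in $\cI^r_{v_1}(S)\setminus\cF$. What must be arranged is that $g$, and hence $\phi$, be injective on $\cF^-$; a canonical choice rule --- for instance, taking a fixed independent set and sliding its elements along the legs toward the leaves, past the deleted vertices --- is the natural device for this. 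By Hall's theorem, the existence of $\phi$ reduces to checking, for every $\cA\sse\cF^-$, that the number of independent $r$-sets through $v_1$ disjoint from at least one member of $\cA$ is at least $|\cA|$, the essential instance being the single-set bound $|\cI^{r-1}(S-N[v_1]-F)|\ge|\cF^-|$ for every independent $r$-set $F$ with $v_1\notin F$. (One can also organise this inductively on $n$: $\cF^-$ is an intersecting family of independent $r$-sets of the smaller graph $S-v_1$, which is a spider or a path and so is covered by the inductive hypothesis together with the \ekr\ property of powers of paths~\cite{HolSpeTal}, bounding $|\cF^-|$; the coupling that every member of $\cF_{v_1}$ meets every member of $\cF^-$ then confines $\cF_{v_1}$ to a small subfamily whenever $\cF^-$ is large.)

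The main obstacle is precisely this comparison. An adversarially spread-out $F$ can shatter $S$ into many short pieces and thereby depress $|\cI^{r-1}(S-N[v_1]-F)|$, while a non-star $\cF$ can keep $|\cF^-|$ moderately large, so the two quantities have to be balanced against one another carefully --- the graph-theoretic analogue of fighting the ``$+1$'' in the Hilton--Milner bound of Theorem~\ref{t:HM}. Spiders are tractable here because $S-N[v_1]$ is again a spider or a disjoint union of paths, so these counts reduce to sums of products of path-independence numbers; and the hypothesis supplies exactly the slack one needs: $r\le\sqrt{n\ln2}-(\ln2)/2$ is equivalent to $(r+\tfrac{\ln2}{2})^{2}\le n\ln2$, hence $r^2\le(n-r)\ln2$, and therefore, using $\ln(1-x)\ge-x/(1-x)$, we obtain $(1-r/n)^r\ge e^{-r^2/(n-r)}\ge\tfrac12$ --- keeping the pertinent ratio of independent-set counts of $S$ above $\tfrac12$, which is what makes the estimate close. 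A little separate bookkeeping will be required for short legs ($\ell_i\in\{1,2\}$, where the neighbour of a leaf is $w$, or where deleting $N[v_1]$ fuses two legs into one path), but these degenerate configurations should not move the threshold. I expect the heart of the argument to be exactly the regime where neither $\cF_{v_1}$ nor $\cF^-$ is a full star.
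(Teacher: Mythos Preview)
Your reduction to the non-star case and the numerical fact $r^2\le(n-r)\ln 2$ are both exactly right, but the proof is not completed: the injection $\phi$ is never constructed (``a canonical choice rule\dots is the natural device'' is a hope, not a definition), and the Hall criterion you isolate,
\[
|\cI^{r-1}(S-N[v_1]-F)|\ \ge\ |\cF^-|\qquad\text{for every }F\in\cF^-,
\]
is neither verified nor obviously true. Indeed, the left side can be as small as roughly $\binom{n-2r-1}{r-2}$ (when $F$ is spread so that deleting it shatters $S-N[v_1]$ into short paths), while nothing you have said prevents $\cF^-$ from being close to a full star in $S-v_1$, of size about $\binom{n-r-1}{r-1}$; so without an independent upper bound on $|\cF^-|$ the comparison fails. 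Your two suggested remedies --- an explicit sliding rule, or induction on $S-v_1$ --- each require real work you have not done, and it is not clear the $(1-r/n)^r\ge\tfrac12$ estimate plugs directly into either.

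The paper sidesteps all of this with a one-line observation you overlooked: $\cI^r(S)\subseteq\binom{[n]}{r}$, so if $\cF$ is intersecting and not a star then Theorem~\ref{t:HM} already gives the \emph{absolute} bound $|\cF|\le\binom{n-1}{r-1}-\binom{n-r-1}{r-1}+1$. On the other side, Lemma~\ref{l:SpiderStar} (proved by gluing the legs of $S-w$ into a single path) gives $s_r(v_1)\ge\binom{n-r-1}{r-1}$. So the whole theorem reduces to the inequality $\binom{n-1}{r-1}<2\binom{n-r-1}{r-1}$, and your own computation $e^{r^2/(n-r)}\le 2$ is precisely what proves it. There is no need for an injection, Hall's theorem, or induction; the Hilton--Milner bound does the coarse work, and the spider structure enters only through the star lower bound.
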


We note that every spider $S$ has $\a(S)= 1> \sqrt{n\ln 2}-(\ln 2)/2$ for $n\le 2$, $\a(S)= 2> \sqrt{n\ln 2}-(\ln 2)/2$ for $n=3$, and $\a(S)\ge (n-1)/2> \sqrt{n\ln 2}-(\ln 2)/2$ for $n\ge 4$.
In other words, the hypothesis of Theorem \ref{t:SpiderEKR} implies that $r\le\a(S)$ for all $n$.

Finally, we prove the following similar result for more general trees.

\begin{thm}
\label{t:SplitEKR}
Let $T$ be a tree on $n$ vertices, with exactly $s>1$ split vertices.
Suppose that $1<s<r/2$ and $r\le\sqrt{n\ln c}-(\ln c)/2$, where $c=2-2s/r$.
Then $T$ is $r$-\ekr.
\end{thm}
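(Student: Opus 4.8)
The plan is to bound any non-star intersecting subfamily of $\cI^r(T)$ from above by the Hilton--Milner quantity, to bound $s_r(v)$ from below at a leaf $v$, and to check that the hypothesis on $r$ makes the first bound smaller than the second. Since $\cI^r(T)\sse\binom{[n]}{r}$ and $r\le\sqrt{n\ln c}-(\ln c)/2<n/2$, Theorem~\ref{t:HM} gives
\[
|\cF|\ \le\ \binom{n-1}{r-1}-\binom{n-r-1}{r-1}+1
\]
for every intersecting $\cF\sse\cI^r(T)$ that is not a star.

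For the lower bound, fix any leaf $v$ with neighbour $u$. Deleting $v$ from an independent $r$-set through $v$ gives a bijection between $\cI^r_v(T)$ and $\cI^{r-1}(T-\{v,u\})$, and $T-\{v,u\}$ is a forest on $n-2$ vertices. So I would first record the extremal fact that every forest $F$ on $m$ vertices has at least $\binom{m-j+1}{j}$ independent $j$-sets for all $j$, with the path $P_m$ extremal. For trees this is a quick induction on $m$ via the identity $i_j(G)=i_j(G-x)+i_{j-1}(G-N[x])$ applied to a leaf $x$ (the two terms handled by the tree case on $m-1$ vertices and the forest case on $m-2$ vertices); a general forest is obtained from a spanning tree by deleting edges, which only increases every $i_j$. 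Applied with $m=n-2$ and $j=r-1$, this yields $s_r(v)\ge\binom{n-r}{r-1}$ for every leaf $v$ of $T$.

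It then remains to verify $\binom{n-1}{r-1}-\binom{n-r-1}{r-1}+1\le\binom{n-r}{r-1}$ under the hypothesis, where $c=2-2s/r$ and the condition $c>1$ is exactly the assumption $s<r/2$. Writing the right-hand side as $2\binom{n-r-1}{r-1}+\binom{n-r-1}{r-2}$, and using that the hypothesis forces $n$ to be large, it suffices to show $\binom{n-1}{r-1}\le c\binom{n-r-1}{r-1}$, equivalently $\prod_{k=n-2r+1}^{n-r-1}(1+r/k)\le c$; bounding each of the $r-1$ factors by $1+r/(n-2r+1)$ and using $\ln(1+x)\le x$ reduces this to $r(r-1)/(n-2r+1)\le\ln c$, which is precisely what $\bigl(r+(\ln c)/2\bigr)^2\le n\ln c$ delivers. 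Hence any non-star intersecting $\cF\sse\cI^r(T)$ has $|\cF|\le\binom{n-r}{r-1}\le s_r(v)$; since $\cI^r_v(T)$ is itself an intersecting family of size $s_r(v)$, a maximum intersecting subfamily of $\cI^r(T)$ must be a full star, so $T$ is $r$-\ekr.

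The only step that needs real care is the extremal lemma on forests; everything after it is bookkeeping, and the admissible range of $r$ is entirely governed by the strength of the lower bound on $s_r(v)$. The constant $c=2-2s/r$ enters only through the final inequality $\prod(1+r/k)\le c$, with room to spare there, so if one wanted to sharpen the dependence on the number or degrees of the split vertices the natural refinement would be to take $v$ at the far end of a longest pendant path and bound $s_r(v)$ by the length of that path rather than by the crude forest estimate.
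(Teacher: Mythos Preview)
Your argument is correct, and the overall architecture (Hilton--Milner above, a leaf-star estimate below, then a ratio computation) matches the paper's. The genuine difference is in the lower bound on $s_r(v)$. The paper deletes the $s$ split vertices and splices the resulting paths into a single path, obtaining $s_r(v)\ge\binom{n-r-s}{r-1}+1$ (Lemma~\ref{l:SplitStar}); this is where the parameter $s$ enters and why the final inequality becomes $\binom{n-1}{r-1}\le\binom{n-r-1}{r-1}+\binom{n-r-s}{r-1}$, forcing $c=2-2s/r$. Your route instead proves the clean extremal fact that $P_m$ minimises $i_j$ among all forests on $m$ vertices (your joint induction on $m$ is fine, and in fact one can induct directly on forests by picking any leaf), which yields the stronger bound $s_r(v)\ge\binom{n-r}{r-1}$ with no dependence on $s$ whatsoever. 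Consequently your method actually proves more than Theorem~\ref{t:SplitEKR}: since your bound is $s$-free, the sufficient inequality is just $\binom{n-1}{r-1}+1\le 2\binom{n-r-1}{r-1}+\binom{n-r-1}{r-2}$, and you may take $c=2$ for every tree, recovering the range $r\le\sqrt{n\ln 2}-(\ln 2)/2$ of Theorem~\ref{t:SpiderEKR} without the spider hypothesis and without any constraint on $s$. Two small points of care: your ``it suffices'' step silently uses $\binom{n-r-1}{r-2}\ge 1$ to absorb the $+1$, which needs $n\ge 2r-1$ (guaranteed by the hypothesis); and ``precisely what $(r+(\ln c)/2)^2\le n\ln c$ delivers'' is a mild overstatement, since passing from $r^2\le(n-r)\ln c$ to $r(r-1)\le(n-2r+1)\ln c$ still requires the one-line check $(r-1)\ln c\le r$, which holds because $\ln c<1$.
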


% ================================================

\section{Technical Lemmas}

\begin{prp}
\label{p:Estim}
If $0\le x\le 2k/(k+1)^2$ for some $k\ge 1$, then $e^{-x}<1-\left(\frac{k}{k+1}\right)x$.
\end{prp}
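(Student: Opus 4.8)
The plan is to squeeze $e^{-x}$ below the line $1-\tfrac{k}{k+1}x$ by routing through the elementary quadratic estimate $e^{-x}<1-x+\tfrac{x^2}{2}$, which holds for every $x>0$. (At $x=0$ both sides of the asserted inequality equal $1$, so the statement is really about $0<x\le\tfrac{2k}{(k+1)^2}$, and I would prove it in that range.)

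First I would establish the auxiliary bound $e^{-x}<1-x+\tfrac{x^2}{2}$ for all $x>0$. Setting $\phi(x)=1-x+\tfrac{x^2}{2}-e^{-x}$, we have $\phi(0)=0$, $\phi'(x)=-1+x+e^{-x}$ with $\phi'(0)=0$, and $\phi''(x)=1-e^{-x}>0$ for $x>0$; hence $\phi'$ is strictly increasing on $[0,\infty)$, so $\phi'(x)>0$ for $x>0$, so $\phi$ is strictly increasing, and therefore $\phi(x)>\phi(0)=0$. (Equivalently, Taylor's theorem with Lagrange remainder gives $e^{-x}=1-x+\tfrac{x^2}{2}-\tfrac{x^3}{6}e^{-\xi}<1-x+\tfrac{x^2}{2}$ for some $\xi\in(0,x)$.)

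Next I would feed in the hypothesis. Since $k\ge1$ we have $\tfrac{k}{k+1}<1$, hence
\[
\frac{2k}{(k+1)^2}=\frac{2}{k+1}\cdot\frac{k}{k+1}<\frac{2}{k+1},
\]
so every $x$ with $0<x\le\tfrac{2k}{(k+1)^2}$ satisfies $x<\tfrac{2}{k+1}$, i.e.\ $\tfrac{x}{2}<\tfrac{1}{k+1}$. Multiplying by $x>0$ gives $\tfrac{x^2}{2}<\tfrac{x}{k+1}$, and so
\[
1-x+\frac{x^2}{2}<1-x+\frac{x}{k+1}=1-\Bigl(1-\frac{1}{k+1}\Bigr)x=1-\frac{k}{k+1}\,x .
\]
Chaining this with the auxiliary bound yields $e^{-x}<1-x+\tfrac{x^2}{2}<1-\tfrac{k}{k+1}x$, as required.

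I do not anticipate any real obstacle: the argument is just two one-line inequalities glued together. The only things to keep in mind are the harmless degenerate endpoint $x=0$, and that $\tfrac{2k}{(k+1)^2}$ is only a convenient sufficient threshold (the exact cutoff is the larger of the two roots of $e^{-x}=1-\tfrac{k}{k+1}x$, which is strictly bigger but messier to describe).
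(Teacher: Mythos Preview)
Your proof is correct and follows essentially the same route as the paper: bound $e^{-x}$ above by $1-x+\tfrac{x^2}{2}$, then use $x<\tfrac{2}{k+1}$ to replace $\tfrac{x^2}{2}$ by $\tfrac{x}{k+1}$ and collapse to $1-\tfrac{k}{k+1}x$. The only cosmetic difference is that the paper appeals to the alternating Taylor series (noting $|x|<1$) for the quadratic bound, whereas you give a derivative argument valid for all $x>0$; you also flag the degenerate endpoint $x=0$, which the paper glosses over.
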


\begin{proof}
Let $0\le x\le 2k/(k+1)^2$ for some $k\ge 1$.
Then $|x|<1$, and so $e^{-x}=\sum_{i\ge 0}(-x)^i/i! < 1-x+x^2/2$.
Also, $(k+1)x<2$, which implies that $x^2/2 < x/(k+1) = [1-k/(k+1)]x$.
Thus $e^{-x} < 1-x+x^2/2 < 1-\left(\frac{k}{k+1}\right)x$.
\end{proof}

\begin{cor}
\label{c:Estim}
If $0\le y\le 2k^2/(k+1)^3$ for some $k\ge 1$, then $1-y>e^{-\left(\frac{k+1}{k}\right)y}$.
\end{cor}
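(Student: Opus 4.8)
The plan is to deduce this directly from Proposition~\ref{p:Estim} by a change of variables. Given $y$ with $0\le y\le 2k^2/(k+1)^3$, I would set $x=\left(\frac{k+1}{k}\right)y$, so that the target inequality $1-y>e^{-\left(\frac{k+1}{k}\right)y}$ becomes $1-\left(\frac{k}{k+1}\right)x>e^{-x}$, which is exactly the conclusion of Proposition~\ref{p:Estim}. So the only thing to check is that this $x$ lies in the range $[0,2k/(k+1)^2]$ required by the proposition.

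The first step is the lower bound: since $y\ge 0$ and $(k+1)/k>0$, we get $x\ge 0$ immediately. The second step is the upper bound: from $y\le 2k^2/(k+1)^3$ I multiply through by $(k+1)/k>0$ to get
\[
x=\left(\frac{k+1}{k}\right)y\le \left(\frac{k+1}{k}\right)\cdot\frac{2k^2}{(k+1)^3}=\frac{2k}{(k+1)^2},
\]
which is precisely the hypothesis of Proposition~\ref{p:Estim}. The third and final step is to invoke the proposition with this $x$ and $k$, obtaining $e^{-x}<1-\left(\frac{k}{k+1}\right)x=1-\left(\frac{k}{k+1}\right)\left(\frac{k+1}{k}\right)y=1-y$, which is the desired conclusion.

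There is no real obstacle here; the statement is essentially a repackaging of Proposition~\ref{p:Estim} with the substitution $x\leftrightarrow\left(\frac{k+1}{k}\right)y$ chosen so that the coefficient $\frac{k}{k+1}$ cancels. The only mild care needed is confirming that the endpoint of the interval transforms correctly, i.e.\ that $2k^2/(k+1)^3$ is exactly $\frac{k}{k+1}$ times $2k/(k+1)^2$, which it is. I would write the argument in two or three lines.
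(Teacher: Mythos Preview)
Your proof is correct and takes exactly the same approach as the paper: set $x=\left(\frac{k+1}{k}\right)y$ and apply Proposition~\ref{p:Estim}. The paper states this in a single line, while you have (correctly) spelled out the verification that the range on $y$ translates to the required range on $x$.
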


\begin{proof}
Set $x=\left(\frac{k+1}{k}\right)y$ and apply Proposition \ref{p:Estim}.
\end{proof}

\begin{lem}
\label{l:estim}
If $r\ge 2$, $d\ge 2$, and $n\ge \frac{27}{8}dr^2$, then $\prod_{i=1}^{r-1}\left(1-\frac{r+id}{n}\right) > \frac{r}{n}$.
\end{lem}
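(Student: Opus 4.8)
The plan is to bound the product from below by replacing it with an exponential and then invoking Corollary~\ref{c:Estim} termwise. First I would note that for each $i$ in the range $1\le i\le r-1$ we have $(r+id)/n \le (r+(r-1)d)/n$, and I would check that $n\ge\frac{27}{8}dr^2$ forces this quantity to be small enough — specifically at most $2k^2/(k+1)^3$ for the choice $k=2$ (so the threshold is $16/27$) — so that Corollary~\ref{c:Estim} applies to each factor with that value of $k$. This gives $1-\frac{r+id}{n} > \exp\!\left(-\frac{3}{2}\cdot\frac{r+id}{n}\right)$ for every $i$, since $(k+1)/k = 3/2$ when $k=2$.

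Multiplying these inequalities over $i=1,\ldots,r-1$ yields
\[
\prod_{i=1}^{r-1}\left(1-\frac{r+id}{n}\right) > \exp\!\left(-\frac{3}{2n}\sum_{i=1}^{r-1}(r+id)\right)
= \exp\!\left(-\frac{3}{2n}\left[(r-1)r + d\cdot\frac{(r-1)r}{2}\right]\right).
\]
So it suffices to show the right-hand side is at least $r/n$, i.e.\ that $\frac{3}{2n}\left[(r-1)r+d(r-1)r/2\right] \le \ln(n/r)$. Here I would use the crude bound $(r-1)r < r^2$ and $d+2\ge d$ appropriately to reduce the exponent to something like $\frac{3r^2(d+2)}{4n}$ or, more simply, bound $(r-1)r(1+d/2) < r^2\cdot\frac{3d}{2}$ using $d\ge 2$ (so $1+d/2 \le 3d/4$ fails — rather $1+d/2 \le d$ when $d\ge 2$, giving the cleaner bound $\frac{3dr^2}{2n}\cdot\frac{1}{?}$); the hypothesis $n\ge\frac{27}{8}dr^2$ then makes the exponent at most a small constant, while $\ln(n/r)\ge \ln(\frac{27}{8}dr)$ grows, so the inequality holds with room to spare for $r,d\ge 2$.

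The main obstacle I anticipate is purely bookkeeping: getting the constants to line up so that (a) the hypothesis $n\ge\frac{27}{8}dr^2$ genuinely guarantees each term $(r+id)/n$ lands below the Corollary~\ref{c:Estim} threshold for the chosen $k$, and (b) after summing, the resulting exponent $-\frac{3}{2n}\cdot(r-1)r\cdot(1+d/2)$ is comfortably bounded by $\ln(r/n)^{-1} = \ln(n/r)$. The factor $\frac{27}{8}$ strongly suggests $k=2$ is the intended choice (the threshold $16/27$ and the numerology $\frac{27}{8} = \frac{27}{16}\cdot 2$), so I would commit to $k=2$ from the start and verify both inequalities hold for all $r\ge 2$, $d\ge 2$; the smallest case $r=d=2$ is likely the tightest and should be checked directly.
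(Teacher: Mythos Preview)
Your approach is essentially the paper's, just with the order of operations swapped. The paper first applies the elementary product inequality $\prod_i(1-x_i)\ge 1-\sum_i x_i$ to get
\[
\prod_{i=1}^{r-1}\Bigl(1-\tfrac{r+id}{n}\Bigr)\ \ge\ 1-\tfrac{(d+2)\binom{r}{2}}{n}\ >\ 1-\tfrac{dr^2}{n}
\]
(using $d\ge 2$), and only then invokes Corollary~\ref{c:Estim} a single time with $y=dr^2/n$ and $k=2$ to obtain the lower bound $e^{-3dr^2/(2n)}\ge e^{-4/9}>0.64$. Since separately $r/n\le 8/(27dr)\le 2/27<0.08$, the lemma follows by comparing these two constants. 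Applying the Corollary termwise first and summing in the exponent afterward, as you propose, works equally well and lands on the same numbers; the paper's ordering is slightly cleaner because it requires only one threshold check rather than $r-1$ of them.

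One arithmetic slip: the threshold $2k^2/(k+1)^3$ at $k=2$ is $8/27$, not $16/27$. Fortunately your termwise check $(r+(r-1)d)/n\le 8/27$ still holds under $n\ge\frac{27}{8}dr^2$ (it reduces to $r+(r-1)d\le dr^2$, which is immediate for $r,d\ge 2$), so nothing breaks.
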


\begin{proof}
We begin with
\[\prod_{i=1}^{r-1}\left(1-\frac{r+id}{n}\right)
\ge 1-\sum_{i=1}^{r-1}\frac{r+id}{n}
= 1-\frac{r(r-1)+d\binom{r}{2}}{n}
= 1-\frac{(d+2)\binom{r}{2}}{n}.\]
Since $d\ge 2$, and by using Corollary \ref{c:Estim} with $y=dr^2/n$ and $k=2$, we have
\[1-\frac{(d+2)\binom{r}{2}}{n}
> 1-\frac{dr^2}{n}
> e^{-3dr^2/2n}
> e^{-4/9}
> .64\ .\]
In addition, we calculate
\[\frac{r}{n}
\le \frac{8}{27dr}
\le \frac{2}{27}
< .08\ ,\]
which completes the proof.
\end{proof}

%We use Corollary \ref{c:Estim} to prove Lemma \ref{l:BigStar}, below, which we will use to prove Theorem \ref{t:HolTalCnj2}.

\begin{clm}
\label{cm:star}
Let $G$ be a graph with $n$ vertices and maximum degree less than $d$.
Then every vertex $v$ satisfies
\[s_r(v)\ge \frac{1}{(r-1)!}(n-d)(n-2d)\cdots (n-(r-1)d).\]
\end{clm}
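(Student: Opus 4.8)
The plan is to bound $s_r(v)$ from below by counting ordered sequences of distinct, pairwise non-adjacent vertices that begin with $v$, and then to divide by $(r-1)!$ to pass from such sequences to the $r$-sets they represent. So I would count tuples $(u_1,u_2,\ldots,u_r)$ with $u_1=v$ for which $\{u_1,\ldots,u_r\}$ is an independent set of size $r$.

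To generate such tuples greedily, fix $u_1=v$ and suppose pairwise non-adjacent vertices $u_1,\ldots,u_{i-1}$ have already been chosen, where $2\le i\le r$. A vertex $u_i$ extends the sequence exactly when it avoids the forbidden set $F_{i-1}=N[u_1]\cup\cdots\cup N[u_{i-1}]$. Since $\deg_G(u_j)<d$ for every $j$, each closed neighborhood $N[u_j]$ contains at most $d$ vertices, so $|F_{i-1}|\le (i-1)d$ and at least $n-(i-1)d$ vertices remain available for $u_i$. (This quantity is positive for every $i\le r$ precisely when $n>(r-1)d$, which is the only regime in which the Claim will be invoked; indeed $n>\tfrac{27}{8}dr^2$ in Theorem \ref{t:HolTalCnj1}.)

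Multiplying the per-step lower bounds over $i=2,\ldots,r$, the number of admissible tuples is at least
\[\prod_{i=2}^{r}\bigl(n-(i-1)d\bigr)=\prod_{j=1}^{r-1}(n-jd)=(n-d)(n-2d)\cdots(n-(r-1)d).\]
Each independent $r$-set $S\ni v$ arises from exactly $(r-1)!$ of these tuples, one for each ordering of the $r-1$ vertices of $S$ other than $v$, so $s_r(v)$ equals $1/(r-1)!$ times the number of admissible tuples, and the asserted inequality follows. I do not expect a genuine obstacle here; the only points to keep straight are that ``maximum degree less than $d$'' gives $|N[u_j]|\le d$ (rather than $d+1$), and that the overcounting factor is $(r-1)!$ and not $r!$ because the first coordinate is pinned to $v$.
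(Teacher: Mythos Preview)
Your argument is correct and is essentially identical to the paper's proof: both build an independent $r$-set containing $v$ greedily, bounding the number of choices at step $i$ below by $n-id$ via $|N[u_j]|\le d$, and then divide the resulting product by $(r-1)!$ to account for the orderings of the $r-1$ vertices other than $v$. You are slightly more explicit than the paper in noting that each independent $r$-set containing $v$ corresponds to \emph{exactly} $(r-1)!$ admissible tuples, but the content is the same.
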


\begin{proof}
Let $W_0$ be the set of vertices of $G$, and set $w_0=v$.
For each $0<i<r$, choose $w_i\in W_i$, where $W_{i+1}=W_i-N[w_i]$.
Then by induction we have $|W_i|\ge n-id$ for each such $i$.
The resulting set $\{w_0,\ldots,w_{r-1}\}$ is independent in $G$ and there are at least $\prod_{0<i<r}(n-id)$ ways to choose such sets, ignoring replication.
Accounting for replication, we obtain the result.
\end{proof}

\begin{lem}
\label{l:BigStar}
Let $H$ be a graph with at least $m=n(1-1/3r)$ vertices and maximum degree less than $d$.
Suppose that $1/3r+rd/n\le 2k^2/(k+1)^3$ for some $k\ge 1$.
Then every vertex $v$ satisfies \[s_r(v) \ge \frac{n^{r-1}}{(r-1)!}e^{-(r-1)2k/(k+1)^2}.\]
\end{lem}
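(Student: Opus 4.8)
The plan is to feed the greedy lower bound of Claim~\ref{cm:star} into the exponential estimate of Corollary~\ref{c:Estim}; the point is that all $r-1$ factors that arise can be bounded below by the single worst one. (The case $r=1$ is trivial, since then both sides equal $1$, so assume $r\ge 2$.)

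Set $N=|V(H)|$, so that $N\ge m=n(1-1/3r)$. Since $H$ has maximum degree less than $d$, Claim~\ref{cm:star} gives
\[s_r(v)\ge\frac{1}{(r-1)!}\prod_{i=1}^{r-1}(N-id).\]
Because $N\ge n(1-1/3r)$, for each $1\le i\le r-1$ we have $N-id\ge n\bigl(1-\frac{1}{3r}-\frac{id}{n}\bigr)=n(1-x_i)$, writing $x_i:=\frac{1}{3r}+\frac{id}{n}$. Thus it suffices to establish the uniform bound $1-x_i>e^{-2k/(k+1)^2}$ for every such $i$: this makes each $1-x_i$ positive, so that $\prod_{i=1}^{r-1}(N-id)\ge n^{r-1}\prod_{i=1}^{r-1}(1-x_i)$, and then $\prod_{i=1}^{r-1}(1-x_i)>e^{-(r-1)2k/(k+1)^2}$, which combined with the displayed inequality is exactly the assertion of the lemma.

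To prove the uniform bound, observe that $x_i$ is increasing in $i$, so $x_i\le x_{r-1}=\frac{1}{3r}+\frac{(r-1)d}{n}<\frac{1}{3r}+\frac{rd}{n}\le 2k^2/(k+1)^3$ by hypothesis. Hence $y:=x_{r-1}$ lies in the admissible range of Corollary~\ref{c:Estim}, which gives $1-x_{r-1}>e^{-\left(\frac{k+1}{k}\right)x_{r-1}}$; and since $x_{r-1}\le 2k^2/(k+1)^3$ and the map $t\mapsto e^{-((k+1)/k)t}$ is decreasing, $e^{-\left(\frac{k+1}{k}\right)x_{r-1}}\ge e^{-\left(\frac{k+1}{k}\right)\cdot 2k^2/(k+1)^3}=e^{-2k/(k+1)^2}$. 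Therefore $1-x_i\ge 1-x_{r-1}>e^{-2k/(k+1)^2}$ for all $i$, as needed.

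This is mostly careful handling of inequalities, and I do not expect a real obstacle. The one delicate point is the last paragraph, where the hypothesis on $\frac{1}{3r}+\frac{rd}{n}$ must be used twice: first to land $x_{r-1}$ inside the interval on which Corollary~\ref{c:Estim} is valid, and then again --- together with monotonicity of the exponential --- to replace the variable exponent $\left(\frac{k+1}{k}\right)x_{r-1}$ by the clean, $i$-independent exponent $2k/(k+1)^2$. Bounding every factor by the largest one costs essentially nothing, since the $x_i$ differ only by multiples of $d/n$ and we exponentiate anyway.
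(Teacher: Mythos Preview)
Your proof is correct and follows essentially the same route as the paper's: apply Claim~\ref{cm:star}, factor out $n$ from each of the $r-1$ terms, bound each $1-x_i$ by the worst case $1-\bigl(\tfrac{1}{3r}+\tfrac{rd}{n}\bigr)$, and then invoke Corollary~\ref{c:Estim} together with the hypothesis to convert this into the fixed exponential $e^{-2k/(k+1)^2}$. Your write-up is in fact slightly more careful than the paper's (you isolate the $r=1$ case, note explicitly why positivity of $1-x_i$ is needed to pass to the product, and distinguish $N=|V(H)|$ from $m$).
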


\begin{proof}
We use Claim \ref{cm:star} and Corollary \ref{c:Estim} with $y=1/3r+rd/n$ to obtain
\begin{align*}
    s_r(v)
    &\ge \frac{1}{(r-1)!}\prod_{0<i<r} (m-id)
        \ge \frac{n^{r-1}}{(r-1)!}\prod_{0<i<r} \left(1-\frac{1}{3r}-\frac{id}{n}\right)\\
    &\ge \frac{n^{r-1}}{(r-1)!}\prod_{0<i<r} \left[1-\left(\frac{1}{3r}+\frac{rd}{n}\right)\right]
        \ge \frac{n^{r-1}}{(r-1)!}\prod_{0<i<r} e^{-\left(\frac{k+1}{k}\right)\left(\frac{1}{3r}+\frac{rd}{n}\right)}\\
    &\ge \frac{n^{r-1}}{(r-1)!}e^{-(r-1)\left(\frac{k+1}{k}\right)\left(\frac{1}{3r}+\frac{rd}{n}\right)}
        \ge \frac{n^{r-1}}{(r-1)!}e^{-(r-1)2k/(k+1)^2}.
\end{align*}
\end{proof}

% ================================================

\section{Proof of Theorem \ref{t:HolTalCnj1}}
\label{s:New}

We use the following result of Frankl \cite{Frankl}.
For $\cF\subseteq\binom{[n]}{r}$, define $\ocFx=\cF-\cF_x$.

\begin{thm}
\label{t:Frankl}
{\bf (Frankl, 2020)}
If $\cF\subseteq \binom{[n]}{r}$ is intersecting and $r<n/72$,
then there is some $x$ such that $|\ocFx|\leq \binom{n-3}{r-2}$. 
\end{thm}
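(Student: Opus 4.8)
The final statement to be proved is Theorem~\ref{t:HolTalCnj1}.

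\textbf{Proof proposal.}

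The plan is to fix a maximum intersecting family $\cF\subseteq\cI^r(G)$ and show that some full star $\cI_v^r(G)$ is at least as large as $\cF$. First I would invoke Theorem~\ref{t:Frankl}: since $G$ has at most $\binom{n}{r}$ many $r$-sets and $\cF$ is an intersecting family of $r$-sets in $[n]$, and since $n>\frac{27}{8}dr^2$ comfortably exceeds $72r$ for the relevant range of $d$ and $r$, the hypothesis $r<n/72$ holds. Thus there exists a vertex $x$ with $|\ocFx|\le\binom{n-3}{r-2}$, where $\ocFx=\cF-\cF_x$. The key identity is then $|\cF|=|\cF_x|+|\ocFx|\le s_r(x)+\binom{n-3}{r-2}$, because $\cF_x\subseteq\cI_x^r(G)$ (every set of $\cF$ containing $x$ is an independent $r$-set through $x$), so $|\cF_x|\le s_r(x)$.

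The heart of the argument is therefore to show that the ``error term'' $\binom{n-3}{r-2}$ is small compared to a full star. The second step is to produce a lower bound on $\max_v s_r(v)$ using Claim~\ref{cm:star}, which gives $s_r(v)\ge\frac{1}{(r-1)!}\prod_{0<i<r}(n-id)$ for \emph{every} vertex $v$. Combining the two steps, it suffices to prove the inequality
\begin{equation}
\label{eq:goal}
\binom{n-3}{r-2}\le \frac{1}{(r-1)!}\prod_{i=1}^{r-1}(n-id)-s_r(x).
\end{equation}
A cleaner route is to compare $\binom{n-3}{r-2}$ directly against $\frac{1}{(r-1)!}\prod_{i=1}^{r-1}(n-id)$: writing $\binom{n-3}{r-2}=\frac{(n-3)(n-4)\cdots(n-r)}{(r-1)!}$ as a product of $r-1$ factors and the star lower bound as $\frac{(n-d)(n-2d)\cdots(n-(r-1)d)}{(r-1)!}$ over the common denominator $(r-1)!$, the task reduces to a term-by-term or ratio comparison of two products of $r-1$ linear factors in $n$. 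I expect that Lemma~\ref{l:estim}, which asserts $\prod_{i=1}^{r-1}(1-\frac{r+id}{n})>\frac{r}{n}$ under exactly the hypothesis $n\ge\frac{27}{8}dr^2$, is the designed tool: rewriting $\binom{n-3}{r-2}$ in terms of the factor $\frac{r}{n}$ times the star product, the lemma furnishes precisely the slack needed so that $s_r(x)+\binom{n-3}{r-2}<\max_v s_r(v)$.

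\textbf{Main obstacle.} The delicate point is the bookkeeping in \eqref{eq:goal}: one must ensure that the full star used as a lower bound is allowed to be a \emph{different} vertex from the center $x$ supplied by Theorem~\ref{t:Frankl}. Since Claim~\ref{cm:star} holds for \emph{every} vertex, I can simply bound $|\cF|\le s_r(x)+\binom{n-3}{r-2}$ and then show this quantity is at most $s_r(x)$ itself would be too weak; instead the cleanest comparison bounds $|\cF|$ against the star at $x$ by showing $\binom{n-3}{r-2}$ is dominated by the gap between $s_r(x)$ and the Claim~\ref{cm:star} lower bound, or more robustly, one absorbs the error term into the product estimate via Lemma~\ref{l:estim} so that $|\cF|\le s_r(v)$ for the maximizing vertex $v$. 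Handling the small cases $r=1$ (trivial, as any single independent $r$-set is a star) and verifying the hypotheses of Lemma~\ref{l:estim} (namely $r,d\ge 2$) separately completes the argument; the case $d=1$ forces $G$ to be a union of edges and isolated vertices, which can be checked directly.
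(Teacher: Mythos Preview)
There is a genuine gap in your argument. From Theorem~\ref{t:Frankl} and the trivial inclusion $\cF_x\subseteq\cI_x^r(G)$ you obtain
\[
|\cF|\le s_r(x)+\binom{n-3}{r-2},
\]
but you then need $|\cF|\le s_r(v)$ for \emph{some} $v$, and nothing in your outline produces this. Your displayed inequality \eqref{eq:goal} asks for $\binom{n-3}{r-2}\le \frac{1}{(r-1)!}\prod_{i=1}^{r-1}(n-id)-s_r(x)$, but by Claim~\ref{cm:star} the right side is \emph{nonpositive}, so this is false. Comparing $\binom{n-3}{r-2}$ to the Claim~\ref{cm:star} lower bound for $s_r(v)$ also cannot help: even if the error term is tiny relative to $s_r(x)$, the vertex $x$ supplied by Theorem~\ref{t:Frankl} could itself be the maximizing vertex, and then $s_r(x)+\binom{n-3}{r-2}>s_r(v)$ for every $v$.

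The missing idea, which the paper uses, is to exploit the intersecting property a second time to get a strict upper bound on $|\cF_x|$ itself. Fix any $E\in\ocFx$ (assumed nonempty). Every member of $\cF_x$ must meet $E$, so any $F\in\cI_x^r(G)$ with $F\cap E=\emptyset$ is \emph{not} in $\cF_x$. Counting such $F$ greedily as in Claim~\ref{cm:star}, but now also avoiding the $r$ vertices of $E$, gives at least $\frac{1}{(r-1)!}\prod_{i=1}^{r-1}(n-r-id)$ of them. Hence
\[
|\cF|\le s_r(x)-\frac{1}{(r-1)!}\prod_{i=1}^{r-1}(n-r-id)+\binom{n-3}{r-2},
\]
and it suffices to show $\prod_{i=1}^{r-1}(n-r-id)\ge (r-1)!\binom{n-3}{r-2}$, for which Lemma~\ref{l:estim} (after the substitution $n\mapsto n$ and noting $\prod_{i=1}^{r-1}(n-r-id)=n^{r-1}\prod_{i=1}^{r-1}(1-\tfrac{r+id}{n})$) is exactly the right tool. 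The conclusion is then $|\cF|\le s_r(x)$, with $x$ itself serving as the star center.
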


% --------------------

\noindent
{\it Proof of Theorem \ref{t:HolTalCnj1}.}
The result is trivial for $r=1$ or $d=1$, so we assume $r\ge 2$ and $d\ge 2$.
Let $x$ be as in Theorem \ref{t:Frankl}, and select $E\in\ocFx$, which we may assume to be nonempty.
Via the same counting method as in Claim \ref{cm:star}, we have at least
\begin{equation}
\label{e:xnotE}
\frac{1}{(r-1)!}(n-r-d)(n-r-2d)\cdots (n-r-(r-1)d)
\end{equation}
$r$-sets $F\in\cI_r(x)$ with $F\cap E=\mt$.
Since $\cF$ is intersecting, these sets are not in $\cF_x$.
Therefore, using Theorem \ref{t:Frankl} and the bound in (\ref{e:xnotE}), we have
\begin{align*}
    |\cF|
    &= |\cF_x|+|\ocFx|\\
    &\le |\cI_r(x)| - \frac{(n-r-d)\cdots (n-r-(r-1)d)}{(r-1)!} +\binom{n-3}{r-2}.
\end{align*}
This upper bound is at most $|\cI_r(x)|$ precisely when
\[\binom{n-3}{r-2}\le \frac{1}{(r-1)!}\prod_{i=1}^{r-1}(n-r-id),\]
which we rewrite as
\[\prod_{i=1}^{r-1}(n-r-id)\ge (r-1)!\binom{n-3}{r-2} = (r-1)\prod_{i=1}^{r-2}(n-2-i).\]
This inequality will follow from showing that
\[\prod_{i=1}^{r-1}(n-r-id)\ge rn^{r-2},\]
which holds by Lemma \ref{l:estim}, and which completes the proof.

% ================================================

\section{Proof of Theorem \ref{t:HolTalCnj2}}
\label{s:Old}

The result is trivial for $r=1$, so we may assume that $r\ge 2$.
Let $V_0$ be the set of vertices of $G$.
For each $i\ge 0$, choose $v_i\in V(G_i)$ such that $\deg_{G_i}(v_i)\ge 3cr$, where $G_{i+1}=G_i-v_i$.
Let $t$ be minimum such that $\D(G_t)<3cr$.
The number of edges removed in this process is at least $3tcr$, which must be at most the number of edges of $G$; thus $t\le n/3r$.
Hence $V(G_t)=n-t\ge n(1-1/3r)$.

Now we set $d=3cr$, $k=4r-7\ge 1$, and calculate that
\[
(k+3)+\left(\frac{3k+1}{k^2}\right)
\le k+7
=4r,
\]
so that $(k+1)^3\le 4k^2r$, which implies that
\[
\frac{1}{3r}+\frac{rd}{n}
< \frac{1}{3r}+\frac{3cr^2}{18cr^3}\\
= \frac{1}{2r}\\
\le \frac{2k^2}{(k+1)^3}.\\
\]
This allows the use of Lemma \ref{l:BigStar} with $H=G_t$, $m=n(1-1/3r)$, and $d=3cr$.
We obtain that each vertex $v$ of $G_t$ has $s_r(v)$ at least
\begin{equation}
\label{e:BigStar}
    \frac{n^{r-1}}{(r-1)!}e^{-(r-1)2k/(k+1)^2}.
\end{equation}

Now we use Theorem \ref{t:HM} to show that any intersecting family $\cF$ of independent $r$-sets that is not a star has size less than (\ref{e:BigStar}).
First, we note the combinatorial identity $\binom{n-1}{r-1}-\binom{n-r-1}{r-1}+1 = 1 + \binom{n-2}{r-2} + \binom{n-3}{r-2} + \cdots + \binom{n-r-1}{r-2}$.
Second, we observe the inequality $r^2/n< e^{-(r-1)2k/(k+1)^2}$.
Indeed, 
\[
\frac{r^2}{n}
< \frac{1}{18cr}
\le e^{-1} 
\le e^{-(r-1)(8r-14)/(4r-6)^2}
= e^{-(r-1)2k/(k+1)^2},
\]
because $c\ge e/36$ and $(4r-6)^2 > (r-1)(8r-14)$ (since $r\ge 2$).

Finally, if $\cF$ is as above, then we have
\[
|\cF|
< r\binom{n-2}{r-2}
= \frac{r(r-1)}{n-1}\binom{n-1}{r-1}
< \frac{r^2}{n}\cdot\frac{n^{r-1}}{(r-1)!}
< \frac{n^{r-1}}{(r-1)!}e^{-(r-1)2k/(k+1)^2}.
\]
This finishes the proof.

% ================================================

\section{Proof of Theorem \ref{t:SpiderEKR}}

\begin{lem}
\label{l:SpiderStar}
Let $S=S(\ell_1,\ldots,\ell_k)$ be a spider on $n$ vertices and let $v$ be a leaf of $S$.
Suppose that $r\le\a(S)$.
Then
\[s_r(v)\ge \binom{n-r-1}{r-1}+\binom{n-k-r-2}{r-2}.\]
\end{lem}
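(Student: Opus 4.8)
The plan is to exploit that $v$ is a leaf: if $u$ denotes its unique neighbour, then an independent $r$-set of $S$ containing $v$ is exactly $\{v\}$ together with an independent $(r-1)$-set of $S-\{v,u\}$, so $s_r(v)=i_{r-1}(S-\{v,u\})$, where I write $i_j(H)$ for the number of independent $j$-sets of a graph $H$. Observe that $S-\{v,u\}$ is a tree on $n-2$ vertices whose only possible vertex of degree at least $3$ is the split vertex $w$. The one structural ingredient I need is the elementary fact that among all vertex-disjoint unions of paths on a fixed number $p$ of vertices, the single path $P_p$ has the fewest independent $j$-sets; equivalently, every such union has at least $\binom{p-j+1}{j}$ independent $j$-sets. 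This follows by merging two path components $P_a$ and $P_b$ into $P_{a+b}$: the independent sets of $P_{a+b}$ are precisely those independent sets of $P_a\cup P_b$ that omit the pair of endpoints made adjacent by the merge, so merging never increases the count, and iterating reduces any union to a single path. I would record this as a one-paragraph preliminary fact.

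I would first dispose of the degenerate case $\dist(w,v)=1$, where $u=w$. Then $S-\{v,w\}$ is a disjoint union of paths on $n-2$ vertices, so the merging fact gives $s_r(v)\ge\binom{n-r}{r-1}$; since $\binom{n-r}{r-1}=\binom{n-r-1}{r-1}+\binom{n-r-1}{r-2}$ by Pascal's rule and $n-r-1\ge n-k-r-2$, this is at least $\binom{n-r-1}{r-1}+\binom{n-k-r-2}{r-2}$, as required.

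For the main case $\dist(w,v)\ge 2$, set $T'=S-\{v,u\}$, a tree on $n-2$ vertices with $\deg_{T'}(w)\le k$ and no other vertex of degree exceeding $2$. I would split the independent $(r-1)$-sets of $T'$ according to whether they contain $w$. The ones avoiding $w$ are exactly the independent $(r-1)$-sets of $T'-w$, which is a disjoint union of paths on $n-3$ vertices, so by the merging fact there are at least $\binom{(n-3)-(r-1)+1}{r-1}=\binom{n-r-1}{r-1}$ of them. The ones containing $w$ are obtained by adjoining $w$ to an independent $(r-2)$-set of $T'-N_{T'}[w]$; since $|N_{T'}[w]|\le k+1$, this graph is a disjoint union of paths on at least $n-k-3$ vertices, contributing at least $\binom{(n-k-3)-(r-2)+1}{r-2}=\binom{n-k-r}{r-2}\ge\binom{n-k-r-2}{r-2}$ sets. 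Summing the two disjoint contributions gives $s_r(v)\ge\binom{n-r-1}{r-1}+\binom{n-k-r-2}{r-2}$.

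I do not expect a genuine obstacle here: the merging fact is the crux and it is easy, and the rest is just the two-way split above. The only care needed is boundary bookkeeping — making sure each binomial coefficient that appears really is the count it is meant to be (this is where the hypothesis $r\le\a(S)$, i.e. non-degeneracy of the relevant independence numbers, enters) and applying Pascal's rule and monotonicity of $\binom{m}{j}$ in $m$ with the usual convention $\binom{m}{j}=0$ for $m<j$. Once that is pinned down, the estimates above go through verbatim.
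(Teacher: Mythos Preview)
Your proof is correct and follows essentially the same route as the paper's: split the independent $r$-sets through $v$ according to whether they contain the split vertex $w$, then lower-bound each part by merging the resulting disjoint union of paths into a single path and counting independent sets there. The one notable difference is that you handle an arbitrary leaf $v$ directly (disposing of the case $u=w$ via Pascal's rule), whereas the paper first invokes Theorem~\ref{t:SpiderOrder} to reduce to the leaf $v=v_k$ in spider order before carrying out the same path-merging argument; your version is thus self-contained and avoids that dependence.
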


\begin{proof}
Let $S=S(\ell_1,\ldots,\ell_k)$, in spider order.
We may assume that $v=v_k$ and then use Theorem \ref{t:SpiderOrder} for the other leaves.
For $S(1,1,\ldots,1)$ we have $s_r(v)=\binom{n-2}{r-1}$ and $k=n-1$, so that $\binom{n-k-r-2}{r-2}=0$ and $\binom{n-2}{r-1} \ge \binom{n-r-1}{r-1}$.
Thus we may assume that $\ell_k\ge 2$, implying that $v$ and $w$ are not adjacent.

We first count the number of independent $r$-sets containing $v$ that do not contain the split vertex $w$.
The number of such sets is
\[|\cI_v^r(S-w)| = |\cI_v^r(\cup_{i=1}^kP_{\ell_i})|,\]
where $P_{\ell_i}$ denotes the path on $\ell_i$ vertices.

Next we add edges to the disjoint union of paths, joining the many paths together to form one long path, thus reducing the number of independent $r$-sets that contain $v$ but not $w$.
For each $1\le i\le k$, let $u_i$ be the neighbor of $w$ on the $wv_i$-path in $S$; that is, the endpoint of the $i^{\rm th}$ path of $S-w$ that is different from $v_i$.
Now, for each $1\le i<k$, add the edge $v_iu_{i+1}$.
Finally, remove $v$ and its unique neighbor, resulting in the graph $P_m$, for $m=n-3$.
This results in the inequality
\[|\cI_v^r(\cup_{i=1}^kP_{\ell_i})|\ge |\cI^{r-1}(P_m)|.\]

We relabel the vertices of $P_m$ as $x_1,\ldots,x_m$, in order.
Observe that $\{x_{a_1},$ $x_{a_1+a_2},$ $\ldots,$ $x_{a_1+\cdots+a_{r-1}}\}$ is independent in $P_m$ if and only if
\begin{equation}
\label{e:sum1}
    \sum_{i=1}^ra_i=m,\ a_1\ge 1,\ a_i\ge 2\ \mbox{for}\ 1<i<r,\ \mbox{and}\ a_r=m-a_{r-1}\ge 0.
\end{equation}
Set $b_1=a_1-1$, $b_i=a_i-2$ for $1<i<r$, and $b_r=a_r$.
Then system (\ref{e:sum1}) can be rewritten as
\begin{equation}
\label{e:sum2}
    \sum_{i=1}^rb_i=m-2r+3=n-2r,\ \mbox{with}\ b_i\ge 0,\ \mbox{for all}\ 1\le i\le r.
\end{equation}
It is well known that the number of integer solutions to system (\ref{e:sum2}) equals
\[\binom{n-2r+r-1}{r-1}=\binom{n-r-1}{r-1}.\]

Second, we count the number of independent $r$-sets containing $v$ that also contain the split vertex $w$.
The number of such sets equals
\[|\cI_v^{r-1}(S-N[w])| = |\cI_v^{r-1}(\cup_{i=1}^kP_{\ell_i-1})|.\]

As above, we add edges to the disjoint union of paths, to reduce the number of independent $r$-sets that contain $v$ and $w$.
For each $1\le i\le k$, let $u'_i$ be the neighbor of $u_i$ other than $w$ on the $wv_i$-path in $S$.
Now, for each $1\le i<k$, add the edge $v_iu'_{i+1}$.
Finally, remove $v$ and its unique neighbor, resulting in the graph $P_{m'}$, for $m'=n-3-k$.
This results in the inequality
\[|\cI_v^{r-1}(\cup_{i=1}^kP_{\ell_i-1})|\ge |\cI^{r-2}(P_{m'})|.\]

Counting via the same method as above, we obtain
\[|\cI^{r-2}(P_{m'})| = \binom{n-k-r-2}{r-2}\]
such sets, which completes the proof.
\end{proof}

% --------------------

\noindent
{\it Proof of Theorem \ref{t:SpiderEKR}}.
It is easy to check that $r\le\sqrt{n\ln 2}-(\ln 2)/2$ implies that $r^2\le (n-r)\ln 2$.
We use this in the calculations below.

Using Lemma \ref{l:SpiderStar} with Theorem \ref{t:HM}, as in the proof of Theorem \ref{t:HolTalCnj2}, the result will follow from proving the inequality
\begin{equation}
\label{e:binoms}
    \binom{n-1}{r-1} < 2\binom{n-r-1}{r-1}.
\end{equation}
To accomplish this, we denote $m^{\underline{t}}=m!/(m-t)!$ and calculate the ratio
\begin{align}
    \binom{n-1}{r-1}\Big/\binom{n-r-1}{r-1}
    &= \frac{(n-1)^{\underline{r-1}}}{(n-r-1)^{\underline{r-1}}}
        \le \frac{(n-r+1)^{r-1}}{(n-2r+1)^{r-1}} \nonumber\\
    &= \left(\frac{n-2r+1}{n-r+1}\right)^{-(r-1)}
        = \left(1-\frac{r}{n-r+1}\right)^{-(r-1)} \nonumber\\
    &\le e^{r(r-1)/(n-r+1)}
        < e^{r^2/(n-r)} \label{i:IneqSquare}\\
    &\le e^{\ln 2}
        = 2, \nonumber
\end{align}
which finishes the proof.

% ================================================

\section{Proof of Theorem \ref{t:SplitEKR}}

\begin{lem}
\label{l:SplitStar}
Let $T$ be a tree on $n$ vertices with exactly $s>1$ split vertices, and let $v$ be a leaf of $T$.
Suppose that $r\le\a(T)$.
Then
\[s_r(v)\ge \binom{n-r-s}{r-1}+1.\]
\end{lem}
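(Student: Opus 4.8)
The plan is to mimic the strategy of Lemma \ref{l:SpiderStar}: count independent $r$-sets containing the leaf $v$ by adding edges to a forest so as to collapse it into a single path (which only decreases the count of independent sets), then count independent sets of a path via the stars-and-bars identity used there. The new wrinkle is that $T$ has $s>1$ split vertices rather than one, so deleting a single split vertex no longer disconnects $T$ into paths. First I would choose which split vertex to delete. Write $T$ rooted at the leaf $v$, and let $w$ be a split vertex of $T$ that is \emph{deepest} (farthest from $v$), so that every vertex below $w$ (in the subtrees hanging off $w$ away from $v$) has degree at most $2$ — i.e., the components of $T-w$ that do not contain $v$ are all paths, exactly as in the spider case. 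The component of $T-w$ containing $v$ is a tree $T'$ with at most $s-1$ split vertices.

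Next I would count $\cI_v^r(T)$ in two parts according to whether the chosen split vertex $w$ lies in the independent set. For sets avoiding $w$, we have $|\cI_v^r(T-w)|$, and $T-w$ is a disjoint union of the tree $T'$ (which still has $v$ as a leaf and at most $s-1$ split vertices) together with several paths. I would iterate: repeatedly pick a deepest split vertex in the remaining tree-part, delete it, and peel off the resulting paths, decreasing the split-vertex count by one each time; after $s$ such deletions we are left with a disjoint union of paths. At each deletion, as in Lemma \ref{l:SpiderStar}, I would then reconnect the various paths into one long path by adding edges between a neighbor of the deleted vertex on one path and an endpoint of the next path; adding edges only reduces the number of independent sets. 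Finally I would remove $v$ and its unique neighbor. Careful bookkeeping of vertices removed should yield a single path $P_m$ with $m = n - (2 + (\text{something}))$; tracking the count, deleting $s$ split vertices plus $v$ and its neighbor removes $s+2$ vertices, so $m = n-s-2$, and we need $\cI^{r-1}(P_{n-s-2})$. By the stars-and-bars computation from Lemma \ref{l:SpiderStar}, the number of independent $(r-1)$-sets of $P_{n-s-2}$ equals $\binom{(n-s-2)-2(r-1)+1+(r-1)}{r-1} = \binom{n-r-s}{r-1}$. Then the $+1$ comes from a single explicit independent $r$-set containing $v$ — for instance, one built greedily or one that uses $w$ — that is not counted among those avoiding $w$; alternatively, count the $w$-containing sets and note there is at least one.

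The main obstacle I anticipate is making the iterated "delete a deepest split vertex, peel off paths, splice into one path" argument rigorous and getting the vertex-count exactly right so that the final path has precisely $n-s-2$ vertices — in particular, verifying that each splicing step is legitimate (that we always have a path endpoint available to attach to a neighbor of the deleted split vertex, so that no independent set of the spliced path fails to correspond to an independent set of the pre-spliced forest) and that deleting a deepest split vertex genuinely leaves a tree with one fewer split vertex. A secondary subtlety is the base case and the $r\le\a(T)$ hypothesis, which (as in the spider lemma) is what guarantees the relevant binomial coefficients and path lengths are nonnegative so the counts make sense; I would handle small cases (e.g. $n-r-s$ small, or the analogue of $S(1,\ldots,1)$) separately, exactly as Lemma \ref{l:SpiderStar} dispatches $S(1,1,\ldots,1)$. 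Once the reduction to $P_{n-s-2}$ is in place, the rest is the same routine stars-and-bars count already carried out in Lemma \ref{l:SpiderStar}, together with exhibiting one extra independent $r$-set for the $+1$.
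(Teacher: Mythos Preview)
Your plan is essentially the paper's, but you are making the reduction harder than it needs to be. The paper does not iterate: it lets $W$ be the set of \emph{all} $s$ split vertices and deletes them simultaneously. Since every vertex of $T-W$ then has degree at most $2$, $T-W$ is already a disjoint union of paths on $n-s$ vertices, and one splices once into a single path $P_{n-s}$ with $v$ as an endpoint, obtaining
\[
s_r(v)\ \ge\ |\cI_v^r(T-W)|\ \ge\ |\cI_v^r(P_{n-s})|\ =\ |\cI^{r-1}(P_{n-s-2})|\ =\ \binom{n-r-s}{r-1}.
\]
This sidesteps entirely the obstacle you flag --- whether deleting a deepest split vertex drops the split count by exactly one --- because no induction on $s$ is needed and the vertex count $n-s$ is immediate.

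For the $+1$, the paper's source is also different from yours: rather than producing a set that contains some split vertex, it observes that the splicing inequality $|\cI_v^r(T-W)|\ge |\cI_v^r(P_{n-s})|$ is strict, by exhibiting an independent $r$-set of $T-W$ that uses both endpoints $u',u''$ of a splicing edge and is therefore not independent in $P_{n-s}$. Your alternative (one set containing $v$ and a split vertex $w$ at distance $\ge 2$ from $v$, which exists since $s>1$ and $v$ is a leaf) would also work, but note that your iterated scheme only isolates the first such $w$; once you drop the iteration, the paper's ``splicing kills one set'' argument is the more natural way to get strictness.
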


\begin{proof}
Let $W$ denote the set of split vertices of $T$.
We need only count the number of independent $r$-sets containing $v$ that do not contain any split vertex.
The number of such sets equals
\[
\cI_v^r(T-W)| 
> |\cI_v^r(P_{n-s})|
= |\cI^{r-1}(P_{n-s-2})|
= \binom{n-r-s}{r-1},
\]
as in the proof of Lemma \ref{l:SpiderStar}.

The strict inequality comes from the existence of at least one independent $r$-set of $T-W$ that is not independent in $P_{n-s}$ because of the joining of the many paths that create $P_{n-s}$.
For example, let $P'$ and $P''$ be two paths in $T-W$ that are consecutive in $P_{n-s}$, with endpoints $u'\in P'$ and $u''\in P''$ such that $u'$ is adjacent to $u''$ in $P_{n-s}$.
Let $A\in\cI_{v}^{r}(P_{n-s})$, define $a'$ to be the vertex in $A$ that is closest to $u'$, $a''$ to be the vertex in $A-\{a'\}$ that is closest to $u''$, and $A'=(A-\{a',a''\})\cup\{u',u''\}$.
Then $A'\in\cI_{v}^{r}(T-W)-\cI_{v}^{r}(P_{n-s})$.
\end{proof}

% --------------------

\noindent
{\it Proof of Theorem \ref{t:SplitEKR}.}
As in the proof of Theorem \ref{t:SpiderEKR}, we use Lemma \ref{l:SplitStar} and Theorem \ref{t:HM}, which reduces the proof to certifying the inequality
\begin{equation}
\label{e:binoms2}
    \binom{n-1}{r-1}\le \binom{n-r-1}{r-1} + \binom{n-r-s}{r-1}.
\end{equation}
Suppose that $1<s<r/2$ and $r\le \sqrt{n\ln c}-(\ln c)/2$, where $c=2-2s/r$.
Let $a=\frac{r^2}{\ln c}+r$ and $b=\frac{(r+2)^3}{2(r+1)}+r+s-1$.
By rearranging the given condition on $r$, we obtain $n\ge a+\frac{\ln c}{4} > a$.
Now let $d=2(r+1)\ln c$ so that we have
\begin{align*}
d(a-b)
&= 2(r+1)r^2 - (\ln c)\left[(r+2)^3 + 2(r+1)(s-1)\right]\\
&> 2(r+1)r^2 - (\ln 2)\left[(r+2)^3 + (r+1)(2s-2)\right]\\
&> 2(r+1)r^2 - 0.7\left[(r+2)^3 + (r+1)(r-2)\right]\\
&= 2r^3+2r^2 - 0.7\left(r^3+7r^2+11r+6\right)\\
&= 1.3r^3 - 2.9r^2 - 7.7r - 4.2\\
&> 0
\end{align*}
since $r\ge 5$.
Because $a-b>0$ and $n>a$, we have $n>b$, which is equivalent to
\begin{equation}
    \label{i:estimbound}
    \frac{r+1}{n-r-s+1} < \frac{2(r+1)^2}{(r+2)^3}.
\end{equation}
Next, we derive the following estimates, using Inequality \ref{i:estimbound} to access Corollary \ref{c:Estim} with $y=(r+1)/(n-r-s+1)$ and $k=r+1$.
\begin{align*}
    \frac{\binom{n-r-1}{r-1} + \binom{n-r-s}{r-1}}{\binom{n-r-1}{r-1}}
    &= 1 + \frac{(n-2r)^{\underline{s-1}}}{(n-r-1)^{\underline{s-1}}}
        \ge 1 + \left(\frac{n-2r-s+2}{n-r-1-s+2}\right)^{s-1}\\
    &> 1 + \left(\frac{n-2r-s}{n-r-s+1}\right)^s
        = 1 + \left(1 - \frac{r+1}{n-r-s+1}\right)^s\\
    &> 1 + e^{-\left(\frac{r+2}{r+1}\right)\left(\frac{r+1}{n-r-s+1}\right)s}
        > 1 + e^{-\left(\frac{r+2}{r+1}\right)\big(\frac{2(r+1)^2}{(r+2)^3}\big)s}\\
    &= 1 + e^{-\big(\frac{2(r+1)}{(r+2)^2}\big)s}
        > 1 + e^{-2s/r}
        > 2 - 2s/r.
\end{align*}
The assumption that $s<r/2$ makes the final result greater than $1$.
Finally, we follow Inequality (\ref{i:IneqSquare}), since $r\le\sqrt{n\ln c}-(\ln c)/2$ implies that $r\le\sqrt{n\ln 2}-(\ln 2)/2$, and calculate the ratio
\[\binom{n-1}{r-1}\Big/\binom{n-r-1}{r-1}
< e^{r^2/(n-r)}
\le e^{\ln (2 - 2s/r)}
= 2 - 2s/r,
\]
which finishes the proof.

% ================================================

\section{Questions and Remarks}

It is clear that improving the orders of magnitude in the upper bound on $r$ in our results will require techniques other than comparison to the Hilton-Milner bounds.
To that end, the specificity of spider structure and the knowledge of the location of their biggest stars begs for a proof that they are $r$-\ekr\ for $r\le\mu/2$ (or possibly $r\le\a$).

Along these lines, consider the family $\cT$ of all trees having no vertex of degree 2.
The authors of \cite{HurKamTrees} conjecture that every tree in $\cT$ is \hk.
Naturally, we believe that such trees are $r$-\ekr\ for all $r\le\m(T)$ as well.
As a first step in this direction, for $i\in\{1,2,3\}$, let $T_i(h)$ be a complete binary tree of depth $h$ (i.e. having $2^{h+1}-1$ vertices), with root vertex $v_i$.
Note that $v_i$ is the unique degree-$2$ vertex in $T_i(h)$.
Now define the tree $T(h)$ by $V(T(h))=\{w\}\cup_{i=1}^3V(T_i(h))$, with $w$ adjacent to each $v_i$.
Then $T(h)\in\cT$.

\begin{prb}
\label{p:tstarEKR}
Show that $T(h)$ is $r$-\ekr\ for all $r\le\mu(T(h))/2$.
\end{prb}

Finally, we say that a family $\cF$ of sets is \ekr\ if it has the property that if $\cH$ is an intersecting subfamily of $\cF$ then there is some element $x$ such that $|\cH|\le |\cF_x|$, and that a graph $G$ is \ekr\ if $\cI(G)$ is \ekr.
We observe that the non-uniform case --- considering $\cI(G)$ instead of $\cI^r(G)$ --- has yet to be studied specifically for graphs.
Of course, this is a special case of Chv\'atal's conjecture (see \cite{Chvatal}) that every subset-closed family $\cF$ of sets is \ekr.
For example, by a result of \cite{Snevily}, every graph with an isolated vertex is \ekr.
Also, powers of paths or cycles (resp. special chains) are \ekr\ by the results of \cite{HolSpeTal,Talbot} (resp. \cite{HurKamChord}) for fixed $r$ because we can use the same star center for each $r$.
The same holds for disjoint unions of complete graphs because the star center is either an isolated vertex, if it exists, or a vertex in a smallest component.
Any vertex-transitive graph $G$ that is $r$-\ekr\ for all $r\le\a(G)$ would also be \ekr.
%For several of the other graphs $G$ mentioned in Section \ref{s:Intro}, such as disjoint unions of various graphs containing an isolated vertex or disjoint unions of length-2 paths, one would need to prove that $G$ is $r$-\ekr\ for all $\mu(G)/2<r\le\a(G)$ to conclude that it is \ekr\ by this ``same star center'' reasoning.
It is conjectured in \cite{FegHurKam} that if $G$ is a disjoint union of length-2 paths then it is $r$-\ekr\ for all $\mu(G)/2<r\le\a(G)/2$.
It may also be true for all $r\le\a(G)$, which would imply that $G$ is \ekr\ because the largest star is always centered on a leaf, and all leaves look alike.
Additionally, if one could prove that every spider $S$ is $r$-\ekr\ for all $r\le\a(S)$ then it would follow from Theorem \ref{t:SpiderOrder} that spiders are \ekr.
Of course, while complete $k$-partite graphs $G$ are not $r$-\ekr\ for $\a(G)/2<r\le\a(G)$, that does not mean that they are not \ekr.

% ================================================
\section*{Acknowledgement}

We thank the referees for making several important comments that improved the exposition and corrected some of the calculations.

% ================================================

\end{document}